\newcommand{\blankbrac}{\left \{ \cdot,\cdot \right \}}
\newcommand{\brac}[2]{\left \{ #1,#2 \right\}}
\newcommand{\cbrac}[2]{\left \llbracket #1,#2 \right \rrbracket}
\newcommand{\ham}[1]{\Omega^{#1}_{\mathrm{Ham}}\left(M\right)}
\newcommand{\cinf}{C^{\infty}}
\renewcommand{\L}{\mathcal{L}}
\newcommand{\ip}[1]{\iota_{v_{#1}}}
\newcommand{\alphak}[1]{\alpha_{1} \tensor \cdots \tensor \alpha_{#1}}
\newcommand{\alphadk}[1]{\alpha_{1},\hdots,\alpha_{#1}}
\newcommand{\alphask}[1]{\alpha_{\sigma(1)} \tensor \cdots \tensor \alpha_{\sigma(#1)}}
\newcommand{\alphasdk}[1]{\alpha_{\sigma(1)}, \hdots,\alpha_{\sigma(#1)}}
\newcommand{\vk}[1]{v_{\alpha_{1}} \wedge \cdots \wedge  v_{\alpha_{#1}}}
\newcommand{\LX}{\mathfrak{X}^{\wedge {\bullet}}}
\newcommand{\VectH}{\mathfrak{X}_{\mathrm{Ham}}}
\newcommand{\R}{\mathbb{R}}
\newcommand{\innerprod}[2]{\langle #1,#2 \rangle}
\newcommand{\g}{\mathfrak{g}}
\newcommand{\tensor}{\otimes}
\newcommand{\maps}{\colon}
\newcommand{\Sh}{\mathrm{Sh}}
\newcommand{\Sn}{\mathcal{S}}
\renewcommand{\deg}[1]{\left \lvert #1 \right \rvert}
\newcommand{\X}{\mathfrak{X}}
\newcommand{\Lie}{L_{\infty}}
\DeclareMathOperator{\id}{\mathrm{id}}
\DeclareMathOperator{\Leib}{\mathrm{Leib}}
\theoremstyle{plain}
\newtheorem{theorem}{Theorem}[section]
\newtheorem{prop}[theorem]{Proposition}
\newtheorem{lemma}[theorem]{Lemma}
\newtheorem{definition}[theorem]{Definition}
\theoremstyle{remark}
\title[$L_{\infty}$-algebras from multisymplectic geometry]
{$L_{\infty}$-algebras from multisymplectic geometry}\author{Christopher L.\ Rogers}
\email{\texttt{chris@math.ucr.edu}} \address{Department of
  Mathematics, University of California, Riverside, California 92521,
  USA} \date{\today} \thanks{This work was partially supported by a
  grant from The Foundational Questions Institute.}
\subjclass[2000]{53D05, 17B55, 70S05}
\begin{document}

\begin{abstract}
  A manifold is multisymplectic, or more specifically $n$-plectic, if
  it is equipped with a closed nondegenerate differential form of
  degree $n+1$.  In previous work with Baez and Hoffnung, we
  described how the `higher analogs' of the algebraic and geometric
  structures found in symplectic geometry should naturally arise in
  2-plectic geometry. In particular, just as a
  symplectic manifold gives a Poisson algebra of functions, any
  2-plectic manifold gives a Lie 2-algebra of 1-forms and
  functions. Lie $n$-algebras are examples of $L_{\infty}$-algebras:
  graded vector spaces equipped with a collection of
  skew-symmetric multi-brackets that satisfy a generalized Jacobi
  identity. Here, we generalize our previous result.  Given an
  $n$-plectic manifold, we explicitly construct a corresponding Lie
  $n$-algebra on a complex consisting of differential forms whose
  multi-brackets are specified by the $n$-plectic structure. We also
  show that any $n$-plectic manifold gives rise to another kind of
  algebraic structure known as a differential graded Leibniz
  algebra. We conclude by describing the similarities between these
  two structures within the context of an open problem in the theory
  of strongly homotopy algebras. We also mention a possible connection
  with the work of Barnich, Fulp, Lada, and Stasheff
  on the Gelfand-Dickey-Dorfman formalism.
 \end{abstract}

\maketitle

\section{Introduction}
Multisymplectic manifolds are smooth manifolds equipped with a closed,
nondegenerate differential form. In this paper, we call such a manifold
`$n$-plectic' if the form has degree $n+1$. Hence
a 1-plectic manifold is a symplectic manifold. Multisymplectic
geometry originated in covariant Hamiltonian formalisms for classical
field theory, just as symplectic geometry originated in classical
mechanics. (See, for example, \cite{Carinena-Crampin-Ibort,GIMM,
  Helein:2002wf,Kijowski:1973gi}, as well as the review article
\cite{RomanRoy:2005en}.)  More specifically, in $(n+1)$-dimensional
classical field theory, one can construct a finite-dimensional
$(n+1)$-plectic manifold known as a `multi-phase space'.  Particular
submanifolds of this space correspond to solutions of the theory. The
data encoded by the submanifolds include the value of the field as
well as the value of its `multi-momentum' at each point in space-time.
The multi-momentum is a quantity that is related to the time and
spacial derivatives of the field via a Legendre transform, in a manner
similar to the relationship between the velocity of a point particle
and its momentum. In fact, a $(0+1)$-dimensional theory is just the
classical mechanics of point particles, and the corresponding
1-plectic manifold is the usual extended phase space whose points
correspond to time, position, energy, and momentum.  

However, multisymplectic manifolds can be found outside
the context of classical field theory and are interesting from a
purely geometric point of view. For motivation, we provide the
following examples:
\begin{itemize}
\item{An $(n+1)$-dimensional orientable manifold
      equipped with a volume form is an $n$-plectic manifold.}
\item{Given a manifold $M$, the $n$-th exterior power of the
      cotangent bundle $\Lambda^{n}T^{\ast}M$ admits a canonical
      closed non-degenerate $(n+1)$-form and therefore is an
      $n$-plectic manifold. This is a generalization of the canonical
      symplectic structure on the cotangent bundle.} 
\item{Any compact simple Lie group $G$ is a 2-plectic manifold 
when equipped with the canonical bi-invariant 3-form
\[
 \nu(x,y,z)=\innerprod{x}{[y,z]}, 
\]
where $x,y,z \in \g$ and $\innerprod{\cdot}{\cdot}$ is the Killing
form.  The relationship between this 2-plectic manifold and the
topological group $\mathrm{String}(n)$, which arises in the study of
spin structures on loop spaces, can be found in our previous work with
Baez \cite{Baez:2009uu}.  }
\item{Let $(M,g)$ be a Riemannian manifold which admits two anti-commuting, almost
    complex structures $J_{1},J_{2} \maps TM \to TM$, i.e.\ $J_{1}^2 =
    J_{2}^2=-\id$ and $J_{1}J_{2}=-J_{2}J_{1}$. Then
    $J_{3}=J_{1}J_{2}$ is also an almost complex structure. If $J_{1},J_{2},J_{3}$
    preserve the metric $g$, then one can define the 2-forms $\theta_{1},\theta_{2},\theta_{3}$,
    where $\theta_{i}(v_{1},v_{2})=g(v_{1},J_{i}v_{2})$. If each $\theta_{i}$ is closed,
    then $M$ is called a hyper-K\"{a}hler manifold \cite{Swann:1991}. 
    Given such a manifold, one can construct the 4-form: 
   \[
   \omega=\theta_{1}\wedge \theta_{1} + \theta_{2}\wedge \theta_{2} + \theta_{3}\wedge \theta_{3}.
   \]
   It is straightforward to show $\omega$ is closed and
   nondegenerate. Hence a hyper-K\"{a}hler manifold is a 3-plectic
   manifold \cite{Cantrijn:1999et}.
} 
\end{itemize}
More examples, as well as the multisymplectic analogs of isotropic submanifolds,
co-isotropic submanifolds and real polarizations can be found in the
papers by Cantrijn, Ibort, and de Le\'{o}n \cite{Cantrijn:1999et} and
Ibort \cite{Ibort:2000}.

In our previous work with Baez and Hoffnung \cite{Baez:2008bu}, we
described how $2$-plectic geometry can be understood as higher or
`categorified' symplectic geometry. 
For example, if a symplectic structure is
integral, then it corresponds to the curvature of a principal
$U(1)$-bundle. Similarly, in the 2-plectic case, the
integrality condition implies that the 2-plectic form is
the curvature of a $U(1)$-gerbe, the higher analog of a principal
$U(1)$-bundle. Just as a principal bundle can be described as a
certain kind of sheaf (its sheaf of sections), a gerbe can be
described as a certain kind of categorified sheaf or stack.

From the algebraic point of view, the fundamental object in symplectic
geometry is the Poisson algebra of smooth functions whose bracket is
induced by the symplectic form. On a 2-plectic manifold, we showed
that a 2-plectic structure gives rise to a Lie 2-algebra on a chain
complex consisting of smooth functions and certain 1-forms which we
call Hamiltonian \cite{Baez:2008bu}.  Lie $n$-algebras (equivalently,
$n$-term $L_{\infty}$-algebras) are higher analogs of differential
graded Lie algebras. They consist of a graded vector space
concentrated in degrees $0,\ldots,n-1$ and are equipped
with a collection of skew-symmetric $k$-ary brackets, for $1 \leq k
\leq n+1$, that satisfy a generalized Jacobi identity
\cite{Lada-Markl,LS}. In particular, the $k=2$ bilinear bracket behaves
like a Lie bracket that only satisfies the ordinary Jacobi identity up
to higher coherent homotopy.

One example of a 2-plectic manifold is the multi-phase space for the
classical bosonic string \cite{GIMM}. We emphasize that this space is 
finite-dimensional, and should not be confused with the infinite-dimensional symplectic
manifold that is used as a phase space in string field theory
\cite{Bowick-Rajeev,Merkulov:1992xm}. Just as the Poisson algebra of smooth
functions represents the observables of a system of particles, we
showed that the Lie 2-algebra of Hamiltonian 1-forms contains the
observables of the bosonic string \cite{Baez:2008bu}. 

We should mention that there exists other geometric objects, such as Courant
algebroids, that also behave like higher symplectic manifolds \cite{Liu:1997}. Interestingly,
Courant algebroids and 2-plectic manifolds have several features in
common. In particular, string theory, closed 3-forms and Lie 2-algebras all play
important roles in the theory of Courant algebroids \cite{Roytenberg-Weinstein,Severa1}. 
We have discussed some details of the relationship between Courant
algebroids and 2-plectic manifolds elsewhere \cite{Rogers:2010ac}. See
also Zambon's recent work \cite{Zambon:2010ka} which relates 2-plectic
geometry to higher Dirac structures.

In the present work, we generalize our previous result
\cite{Baez:2008bu} involving 2-plectic manifolds and Lie 2-algebras to
$n$-plectic manifolds for arbitrary $n \geq 1$.  Given an $n$-plectic
manifold, we define a particular space of $(n-1)$-forms as
Hamiltonian, and explicitly construct a Lie $n$-algebra on a complex
consisting of these forms and arbitrary $p$-forms for $0 \leq p \leq
n-2.$ The bilinear bracket, as well as all higher $k$-ary brackets, are
specified by the $n$-plectic structure.  We then show that any
$n$-plectic manifold gives rise to another kind of algebraic structure
known as a differential graded (dg) Leibniz algebra. A dg Leibniz
algebra is a graded vector space equipped with a degree $-1$ differential
and a bilinear bracket that satisfies a Jacobi-like identity, but does
not need to be skew-symmetric.  There is an interesting relationship between
the bilinear bracket on the Lie $n$-algebra and the bracket on the
corresponding dg Leibniz algebra. We describe some similarities
between these two structures within the context of an open problem in
the theory of strongly homotopy algebras. Finally, we point out that
Barnich, Fulp, Lada, and Stasheff have shown that
$L_{\infty}$-algebras naturally arise in the Gelfand-Dickey-Dorfman
formalism for classical field theory \cite{Barnich:1997ij}, and that
recent work by Bridges, Hydon, and Lawson \cite{Bridges:2010}
relating multisymplectic geometry to the variational bicomplex may possibly be
used to study the similarities between these $L_{\infty}$-algebras and the Lie $n$-algebras
constructed here. 

\section{Notation and preliminaries}
\subsection{Graded linear algebra}
Let $V$ be a graded vector space. 
Let $x_{1},\hdots,x_{n}$ be elements of $V$ and $\sigma \in \Sn_n$ a permutation. The \textbf{Koszul sign} $\epsilon(\sigma)=\epsilon(\sigma ; x_{1},\hdots,x_{n})$ is defined by the equality
\[
x_{1} \wedge \cdots \wedge x_{n} = \epsilon(\sigma ; x_{1},\hdots,x_{n}) x_{\sigma(1)} \wedge \cdots \wedge x_{\sigma(n)}
\]
which holds in the free graded commutative algebra generated by
$V$. Given $\sigma \in \Sn_n$, let $(-1)^{\sigma}$
denote the usual sign of a permutation. Note that $\epsilon(\sigma)$ does
not include the sign $(-1)^{\sigma}$.  

We say $\sigma \in \Sn_{p+q}$ is a {\bf $\mathbf{(p,q)}$-unshuffle}
iff $\sigma(i) < \sigma(i+1)$ whenever $i \neq p$.  The set of
$(p,q)$-unshuffles is denoted by $\Sh(p,q)$. For example, $\Sh(2,1) =
\{ (1), (23), (123) \}$.

If $V$ and $W$ are graded
vector spaces, a linear map $f \maps V^{\tensor n} \to W$ is
\textbf{skew-symmetric} iff
\[
f(v_{\sigma(1)},\hdots,v_{\sigma(n)}) = (-1)^{\sigma}\epsilon(\sigma)
f(v_{1},\hdots,v_{n}),
\]
for all $\sigma \in \Sn_{n}$. The degree of an element $x_{1} \tensor \cdots
\tensor x_{n} \in V^{\tensor \bullet}$ of the graded tensor algebra
generated by $V$ is defined to be $\deg{x_{1} \tensor \cdots
\tensor x_{n}}=\sum_{i=1}^{n} \deg{x_{i}}$.

\subsection{Multivector calculus} In order to aid our computations, we
introduce some notation and review the Cartan calculus involving multivector
fields and differential forms. We follow the notation and sign
conventions found in Appendix A of the paper by Forger, Paufler, and
R\"{o}mer \cite{Forger:2002ak}. Let $\X(M)$ be the $\cinf(M)$-module of
vector fields on a manifold $M$ and
let 
\[
\LX(M)=\bigoplus^{\dim M}_{k=0}\Lambda^{k} \left(\X(M) \right)
\]
be the graded commutative algebra of multivector fields. On $\LX(M)$ there is a
$\R$-bilinear map $[\cdot,\cdot] \maps \LX(M) \times \LX(M) \to \LX(M)$ called
the \textbf{Schouten bracket}, which gives $\LX(M)$ the structure of a
Gerstenhaber algebra. This means the Schouten bracket is a
degree $-1$ Lie bracket which satisfies the graded Leibniz rule with respect to the
wedge product. The Schouten bracket of two decomposable multivector fields
$u_{1} \wedge \cdots \wedge u_{m}, v_{1} \wedge \cdots \wedge v_{n}
\in \LX(M)$ is
\begin{multline} \label{Schouten}
\left [ u_{1} \wedge \cdots \wedge u_{m}, v_{1} \wedge \cdots \wedge
  v_{n} \right] 
= \\\sum_{i=1}^{m} \sum_{j=1}^{n} (-1)^{i+j} [u_{i},v_{j}]
\wedge u_{1} \wedge \cdots \wedge \hat{u}_{i} \wedge  \cdots \wedge
u_{m}\\
\quad \wedge v_{1} \wedge \cdots \wedge \hat{v}_{j} \wedge \cdots \wedge v_{n},
\end{multline}

where $[u_{i},v_{j}]$ is the usual Lie bracket of vector fields.

Given a form $\alpha \in \Omega^{\bullet}(M)$, the \textbf{interior product} of a decomposable
multivector field $v_{1} \wedge \cdots \wedge v_{n}$ with $\alpha$ is
\begin{equation} \label{interior}
\iota(v_{1} \wedge \cdots \wedge v_{n}) \alpha = \iota_{v_{n}} \cdots
\iota_{v_{1}} \alpha,
\end{equation}
where $\iota_{v_{i}} \alpha$ is the usual interior product of vector
fields and differential forms. The interior product of an arbitrary
multivector field is obtained by extending the above formula by $\cinf(M)$-linearity. 

The \textbf{Lie derivative} $\L_{v}$ of a differential form along a multivector field $v \in
\LX(M)$ is defined via the graded commutator of $d$ and $\iota(v)$:
\begin{equation} \label{Lie}
\L_{v} \alpha =  d \iota(v) \alpha - (-1)^{\deg{v}} \iota(v) d\alpha,
\end{equation}
where $\iota(v)$ is considered as a degree $-\deg{v}$ operator.

The last identity we will need involving multivector fields is for the graded commutator of
the Lie derivative and the interior product. Given $u,v \in
\LX(M)$, it follows from Proposition A3 in \cite{Forger:2002ak} that
\begin{equation} \label{commutator}
\iota([u,v]) \alpha = (-1)^{(\deg{u}-1)\deg{v}} \L_{u} \iota(v)  \alpha - \iota(v)\L_{u} \alpha.
\end{equation}

\section{Multisymplectic geometry} \label{multisymplectic}
We use the definition of a multisymplectic form given by Cantrijn, Ibort, and de Le\'{o}n \cite{Cantrijn:1999et}.
Many of the definitions and basic results for $n$-plectic
structures presented in this section appeared previously in our work with Baez and Hoffnung
\cite{Baez:2008bu}. 
\begin{definition}[\cite{Baez:2008bu,Cantrijn:1999et}]
\label{multisymp}
An $(n+1)$-form $\omega$ on a smooth manifold $M$ is 
{\bf multisymplectic}, or more specifically
an {\boldmath $n$}-{\bf plectic structure}, if it is both closed:
\[
    d\omega=0,
\]
and nondegenerate:
\[
    \forall v \in T_{x}M,\ \iota_{v} \omega =0 \Rightarrow v =0.
\] 
If $\omega$ is an $n$-plectic form on $M$ we call the pair $(M,\omega)$ 
a {\bf multisymplectic manifold}, or \textbf{n}-{\bf plectic manifold}.
\end{definition}
The name `$n$-plectic' was chosen so that a 1-plectic
structure is a symplectic structure. 

An $n$-plectic structure induces an injective map from the
space of vector fields on $M$ to the space of $n$-forms on $M$. This leads
us to the following definition:

\begin{definition}[\cite{Baez:2008bu}] \label{hamiltonian}
Let $(M,\omega)$ be an $n$-plectic manifold.  An $(n-1)$-form $\alpha$
is {\bf Hamiltonian} iff there exists a vector field $v_\alpha \in \X(M)$ such that
\[
d\alpha= -\ip{\alpha} \omega.
\]
We say $v_\alpha$ is the {\bf Hamiltonian vector field} corresponding to $\alpha$. 
The set of Hamiltonian $(n-1)$-forms and the set of Hamiltonian vector
fields on an $n$-plectic manifold are both vector spaces and are denoted
as $\ham{n-1}$ and $\VectH \left(M \right)$, respectively.
\end{definition}

The Hamiltonian vector field $v_\alpha$ is unique if it exists, but
there may be $(n-1)$-forms having no Hamiltonian vector field.  Note
that if $\alpha \in \Omega^{n-1}(M)$ is closed, then it is Hamiltonian
and its Hamiltonian vector field is the zero vector field. 
 
An elementary, yet important, fact is that the flow of a Hamiltonian
vector field preserves the $n$-plectic structure.
\begin{lemma}[\cite{Baez:2008bu}]\label{L_thm}
If $v_{\alpha}$ is a Hamiltonian vector field, then $\L_{v_{\alpha}}\omega =0$.
\end{lemma}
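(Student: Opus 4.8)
The plan is to apply Cartan's homotopy formula to $\omega$ along $v_{\alpha}$ and then invoke the two defining properties at our disposal: the Hamiltonian condition on $\alpha$ and the closedness of $\omega$.

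First I would specialize the Lie derivative formula \eqref{Lie} to the case of an ordinary vector field. Since $v_{\alpha}$ has degree $1$, we have $(-1)^{\deg{v_{\alpha}}}=-1$, and \eqref{Lie} collapses to the familiar Cartan magic formula
\[
\L_{v_{\alpha}} \omega = d\, \ip{\alpha} \omega + \ip{\alpha}\, d\omega.
\]
I would then treat the two summands separately. For the first term, Definition \ref{hamiltonian} gives $\ip{\alpha}\omega = -d\alpha$, so that $d\, \ip{\alpha}\omega = -d\, d\alpha = 0$ because $d^{2}=0$. For the second term, $\omega$ is an $n$-plectic form and hence closed by Definition \ref{multisymp}, so $d\omega = 0$ and therefore $\ip{\alpha}\, d\omega = 0$. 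Combining these observations, both summands vanish and we conclude $\L_{v_{\alpha}}\omega = 0$.

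There is no substantive obstacle here; the statement follows formally from $d^{2}=0$ together with the two hypotheses, and this is precisely the mechanism that will make the Hamiltonian vector fields close under the relevant operations later on. The only point demanding care is the sign convention in \eqref{Lie}: one must confirm that for a degree-$1$ multivector field the formula reduces to the standard Cartan identity with a plus sign in front of $\ip{\alpha}\, d\omega$, after which the argument is immediate.
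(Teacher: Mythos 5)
Your proof is correct and follows exactly the same route as the paper's: apply Cartan's formula $\L_{v_{\alpha}}\omega = d\,\ip{\alpha}\omega + \ip{\alpha}\,d\omega$, then kill the first term via $\ip{\alpha}\omega = -d\alpha$ and $d^{2}=0$, and the second via $d\omega=0$. Your added check that the sign convention in Eq.\ \ref{Lie} reduces to the standard Cartan identity for a degree-$1$ field is a nice point of care, but the argument is otherwise identical to the one in the paper.
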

\begin{proof}
\[
\L_{v_{\alpha}} \omega = d \ip{\alpha} \omega+ \ip{\alpha} d \omega
=-d d \alpha =0
\]
\end{proof}

We now define a bracket on $\ham{n-1}$ that generalizes the Poisson
bracket in symplectic geometry. One motivation for considering this
bracket comes from its appearance in multisymplectic formulations of
classical field theories \cite{Helein:2002wf,Kijowski:1973gi}, in
which the usual infinite-dimensional symplectic phase space is
replaced with a finite-dimensional `multi-phase space'.  

\begin{definition}[\cite{Baez:2008bu}]
\label{bracket_def}
Given $\alpha,\beta\in \ham{n-1}$, the {\bf bracket} $\brac{\alpha}{\beta}$
is the $(n-1)$-form given by 
\[  \brac{\alpha}{\beta} = \iota_{v_{\beta}}\iota_{v_{\alpha}}\omega .\]
\end{definition}

When $n=1$, this bracket is the usual Poisson bracket of smooth
functions on a symplectic manifold. These next propositions show that
for $n>1$ the bracket of Hamiltonian forms has several properties in
common with the Poisson bracket in symplectic geometry. However,
unlike the case in symplectic geometry, we see that the bracket
$\blankbrac$ does not need to satisfy the Jacobi identity for $n >1$.

\begin{prop}[\cite{Baez:2008bu}]\label{bracket_prop} Let $\alpha,\beta \in
  \ham{n-1}$ and
$v_{\alpha},v_{\beta}$ be their respective Hamiltonian
vector fields.  The bracket $\brac{\cdot}{\cdot}$ has the following properties:
  \begin{enumerate}
\item{The bracket is skew-symmetric:
\[
\brac{\alpha}{\beta}=-\brac{\beta}{\alpha}.
\]
}
\item{ The bracket of Hamiltonian forms is Hamiltonian:
  \[
    d\brac{\alpha}{\beta} = -\iota_{[v_{\alpha},v_{\beta}]} \omega,
  \]
and in particular we have 
\[     v_{\brac{\alpha}{\beta}} = [v_{\alpha},v_{\beta}]  .\]
}
\end{enumerate}
\end{prop}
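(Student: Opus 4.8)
The plan is to establish the two claims by direct computation with the Cartan calculus, relying on Lemma~\ref{L_thm} and the nondegeneracy of $\omega$.

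Skew-symmetry is immediate. Since the interior product by a vector field is a degree $-1$ operator on forms, contractions by two vector fields anticommute, so $\ip{\beta}\ip{\alpha}\omega = -\ip{\alpha}\ip{\beta}\omega$. Reading off the definition of the bracket on each side gives $\brac{\alpha}{\beta} = -\brac{\beta}{\alpha}$.

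For the second claim the strategy is to compute $d\brac{\alpha}{\beta} = d\,\ip{\beta}\ip{\alpha}\omega$ directly and identify the result. First I would apply Cartan's formula (the specialization of \eqref{Lie} to a vector field, $\L_{v_{\beta}} = d\,\ip{\beta} + \ip{\beta}\,d$), which yields
\[
d\,\ip{\beta}\ip{\alpha}\omega = \L_{v_{\beta}}\ip{\alpha}\omega - \ip{\beta}\,d\,\ip{\alpha}\omega.
\]
The second term vanishes: applying \eqref{Lie} to $\ip{\alpha}\omega$ together with $d\omega = 0$ gives $d\,\ip{\alpha}\omega = \L_{v_{\alpha}}\omega$, which is zero by Lemma~\ref{L_thm}. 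For the first term I would invoke the commutator identity \eqref{commutator} with $u = v_{\beta}$ and $v = v_{\alpha}$. Because both are vector fields, the sign prefactor $(-1)^{(\deg{v_{\beta}}-1)\deg{v_{\alpha}}}$ equals $1$, so \eqref{commutator} reads $\L_{v_{\beta}}\ip{\alpha}\omega = \iota_{[v_{\beta},v_{\alpha}]}\omega + \ip{\alpha}\L_{v_{\beta}}\omega$. Once again $\L_{v_{\beta}}\omega = 0$ by Lemma~\ref{L_thm}, and using $[v_{\beta},v_{\alpha}] = -[v_{\alpha},v_{\beta}]$ I obtain $d\brac{\alpha}{\beta} = -\iota_{[v_{\alpha},v_{\beta}]}\omega$, as required.

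This same computation shows that $\brac{\alpha}{\beta}$ is itself Hamiltonian, since it exhibits $[v_{\alpha},v_{\beta}]$ as a vector field satisfying the defining relation of Definition~\ref{hamiltonian}. The concluding identity $v_{\brac{\alpha}{\beta}} = [v_{\alpha},v_{\beta}]$ then follows from nondegeneracy: the equality $\iota_{v_{\brac{\alpha}{\beta}}}\omega = \iota_{[v_{\alpha},v_{\beta}]}\omega$ forces the two vector fields to coincide because $v \mapsto \iota_{v}\omega$ is injective. The computation is routine, and the only points requiring attention are the sign bookkeeping in the two Cartan-calculus steps---made painless here by the vanishing of the prefactor in \eqref{commutator}---and the observation that the final ``in particular'' statement is precisely where nondegeneracy (equivalently, uniqueness of Hamiltonian vector fields) enters.
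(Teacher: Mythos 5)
Your proposal is correct and follows essentially the same route as the paper: Cartan's formula to rewrite $d\ip{\beta}\ip{\alpha}\omega$, the commutator identity \eqref{commutator} (with trivial sign prefactor), and Lemma \ref{L_thm} to kill the Lie-derivative terms. The only cosmetic difference is that you dispose of the term $\ip{\beta}\,d\,\ip{\alpha}\omega$ via $d\ip{\alpha}\omega = \L_{v_{\alpha}}\omega = 0$, whereas the paper uses $\ip{\alpha}\omega = -d\alpha$ and $d^{2}=0$; and you make explicit the appeal to nondegeneracy for $v_{\brac{\alpha}{\beta}} = [v_{\alpha},v_{\beta}]$, which the paper leaves implicit.
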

\begin{proof} 
The first statement follows from the antisymmetry of $\omega$. To
prove the second statement, we use Lemma \ref{L_thm}:
\begin{align*}
d\brac{\alpha}{\beta} & = d\ip{\beta}\ip{\alpha} \omega\\
&= \left (\L_{v_{\beta}}-
\iota_{v_{\beta}} d \right) \iota_{v_{\alpha}}\omega \\
&=\L_{v_{\beta}} \iota_{v_{\alpha}}\omega + \iota_{v_{\beta}} d  d\alpha \\
&=\iota_{[v_{\beta},v_{\alpha}]}\omega +
\iota_{v_{\alpha}} \L_{v_{\beta}} \omega\\
&=-\iota_{[v_{\alpha},v_{\beta}]}\omega.
\end{align*}
\end{proof}

\begin{prop}[\cite{Baez:2008bu}]\label{no_jacobi}
The bracket $\blankbrac$ satisfies the Jacobi identity up to an exact $(n-1)$-form:
\[
    \brac{\alpha_{1}}{\brac{\alpha_{2}}{\alpha_{3}}} -
    \brac{\brac{\alpha_{1}}{\alpha_{2}}}{\alpha_{3}} 
    -\brac{\alpha_{2}}{\brac{\alpha_{1}}{\alpha_{3}}} =-d\iota(v_{\alpha_{1}}
      \wedge v_{\alpha_{2}} \wedge v_{\alpha_{3}}) \omega.
\]
\end{prop}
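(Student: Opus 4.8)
The plan is to reduce the left-hand side to a single interior product and then identify the ``defect'' as the advertised exact term. Writing $v_i$ for the Hamiltonian vector field $v_{\alpha_i}$, I would first rewrite each of the three terms using Proposition \ref{bracket_prop}, which identifies the Hamiltonian vector field of a bracket with the Lie bracket of the underlying vector fields. Since $\brac{\alpha}{\beta} = \iota_{v_\beta}\iota_{v_\alpha}\omega$ and $v_{\brac{\alpha}{\beta}} = [v_\alpha,v_\beta]$, this gives
\[
\brac{\alpha_1}{\brac{\alpha_2}{\alpha_3}} = \iota_{[v_2,v_3]}\iota_{v_1}\omega, \qquad \brac{\brac{\alpha_1}{\alpha_2}}{\alpha_3} = \iota_{v_3}\iota_{[v_1,v_2]}\omega, \qquad \brac{\alpha_2}{\brac{\alpha_1}{\alpha_3}} = \iota_{[v_1,v_3]}\iota_{v_2}\omega.
\]

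Next I would isolate the first of these. Applying the ordinary Cartan formula $\L_{v_1} = d\iota_{v_1} + \iota_{v_1}d$ (the degree-one case of \eqref{Lie}) to $\brac{\alpha_2}{\alpha_3} = \iota_{v_3}\iota_{v_2}\omega$, and substituting $d\brac{\alpha_2}{\alpha_3} = -\iota_{[v_2,v_3]}\omega$ from Proposition \ref{bracket_prop} together with the anticommutativity $\iota_{v_1}\iota_{[v_2,v_3]} = -\iota_{[v_2,v_3]}\iota_{v_1}$, yields
\[
\brac{\alpha_1}{\brac{\alpha_2}{\alpha_3}} = \iota_{[v_2,v_3]}\iota_{v_1}\omega = \L_{v_1}\brac{\alpha_2}{\alpha_3} - d\,\iota_{v_1}\iota_{v_3}\iota_{v_2}\omega.
\]
I would then expand the Lie derivative $\L_{v_1}\brac{\alpha_2}{\alpha_3} = \L_{v_1}\iota_{v_3}\iota_{v_2}\omega$ by applying the commutator identity \eqref{commutator} twice; in the degree-one case its sign is trivial, so it reads $\L_{v_1}\iota_{w} - \iota_{w}\L_{v_1} = \iota_{[v_1,w]}$. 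Pushing $\L_{v_1}$ through both interior products and killing the remaining term with $\L_{v_1}\omega = 0$ from Lemma \ref{L_thm} leaves exactly
\[
\L_{v_1}\brac{\alpha_2}{\alpha_3} = \iota_{v_3}\iota_{[v_1,v_2]}\omega + \iota_{[v_1,v_3]}\iota_{v_2}\omega = \brac{\brac{\alpha_1}{\alpha_2}}{\alpha_3} + \brac{\alpha_2}{\brac{\alpha_1}{\alpha_3}}.
\]

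Substituting this back rearranges immediately into the claimed identity, provided one checks that $\iota_{v_1}\iota_{v_3}\iota_{v_2}\omega = \iota(v_1\wedge v_2\wedge v_3)\omega$. This final equality follows from the ordering convention \eqref{interior} together with anticommutativity of interior products, since reordering $\iota_{v_1}\iota_{v_3}\iota_{v_2}$ into $\iota_{v_3}\iota_{v_2}\iota_{v_1}$ requires an even number of transpositions. The computation is essentially bookkeeping, so I expect the main obstacle to be sign discipline: one must track the anticommutation of the $\iota_{v_i}$, the degree-dependent sign in \eqref{commutator} (trivial here only because each $v_i$ has degree one), and the order reversal built into \eqref{interior}. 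A cleaner but equivalent route would assemble all three terms at once into $\iota([v_1, v_2\wedge v_3])\omega$ using the graded Leibniz rule \eqref{Schouten} for the Schouten bracket and then invoke \eqref{commutator} and \eqref{Lie} directly; I would nonetheless keep the step-by-step expansion above as the primary argument, since it makes the provenance of each sign transparent.
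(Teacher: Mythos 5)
Your proof is correct, and it is essentially the paper's own argument: the paper proves the proposition by citing Lemma \ref{tech_lemma} at $m=3$, and that lemma's inductive step consists of exactly your chain of identities --- Cartan's formula \eqref{Lie}, the commutator identity \eqref{commutator}, $\L_{v}\omega=0$ from Lemma \ref{L_thm}, and Proposition \ref{bracket_prop} supplying both $d\brac{\alpha}{\beta}=-\iota_{[v_{\alpha},v_{\beta}]}\omega$ and the base case --- just phrased in multivector notation, with your final reordering check $\iota_{v_{1}}\iota_{v_{3}}\iota_{v_{2}}\omega=\iota(v_{1}\wedge v_{2}\wedge v_{3})\omega$ handled there by the convention \eqref{interior}. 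Indeed, the ``cleaner'' route you sketch at the end, assembling the terms into $\iota([v_{1},v_{2}\wedge v_{3}])\omega$ via the Schouten bracket \eqref{Schouten}, is precisely the paper's Lemma \ref{tech_lemma} specialized to three Hamiltonian vector fields.
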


A proof of Proposition \ref{no_jacobi} was given by direct computation
in \cite{Baez:2008bu}. However, it also follows from the next lemma. 
We will use this lemma again in the proof of Theorem \ref{main_thm} in
Section \ref{main}.

\begin{lemma}\label{tech_lemma}
If $(M,\omega)$ is an $n$-plectic manifold and $v_{1},\hdots,v_{m} \in
\VectH(M)$ with $m \geq 2$ then
\begin{multline} \label{big_identity}
d \iota(v_{1} \wedge\cdots \wedge v_{m}) \omega = \\(-1)^{m}\sum_{1 \leq i < j \leq
  m} (-1)^{i+j} \iota([v_{i},v_{j}] \wedge v_{1} \wedge \cdots
  \wedge \hat{v}_{i} \wedge \cdots \wedge \hat{v}_{j} \wedge \cdots \wedge v_{m})
\omega. 
\end{multline}
\end{lemma}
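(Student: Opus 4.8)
The plan is to induct on the number $m$ of Hamiltonian vector fields. The base case $m=2$ is precisely the second part of Proposition~\ref{bracket_prop}: the only pair is $(i,j)=(1,2)$ and the complementary wedge is empty, so the claimed right-hand side reduces to $-\iota([v_1,v_2])\omega$, which is exactly the value of $d\iota(v_1\wedge v_2)\omega = d\ip{2}\ip{1}\omega$ computed there.

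For the inductive step I would set $V' = v_2 \wedge \cdots \wedge v_m$, a multivector field of degree $m-1$, and split $v_1 \wedge \cdots \wedge v_m = v_1 \wedge V'$. From the definition \eqref{interior} one has $\iota(v_1 \wedge V')\omega = \iota(V')\ip{1}\omega$. Since $v_1$ is Hamiltonian, Lemma~\ref{L_thm} gives $\L_{v_1}\omega=0$, whence $d\ip{1}\omega = \L_{v_1}\omega - \ip{1}d\omega = 0$, so $\ip{1}\omega$ is closed. Applying the Lie-derivative identity \eqref{Lie} to $V'$ acting on the closed form $\ip{1}\omega$ then yields
\[
d\iota(v_1 \wedge V')\omega = d\iota(V')\ip{1}\omega = \L_{V'}\ip{1}\omega.
\]

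Next I would expand $\L_{V'}\ip{1}\omega$ using the commutator identity \eqref{commutator} with $u=V'$ (degree $m-1$) and $v=v_1$ (degree $1$), which rearranges to
\[
\L_{V'}\ip{1}\omega = (-1)^{m}\bigl(\iota([V',v_1])\omega + \ip{1}\L_{V'}\omega\bigr).
\]
The first term is handled by the Schouten-bracket formula \eqref{Schouten}: expanding $[V',v_1] = [v_2\wedge\cdots\wedge v_m,\, v_1]$ gives $\sum_{k=2}^m (-1)^{k+1}[v_1,v_k]\wedge v_2 \wedge \cdots \wedge \hat v_k \wedge \cdots \wedge v_m$, so $(-1)^m\iota([V',v_1])\omega$ reproduces exactly the $i=1$ terms of \eqref{big_identity}. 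For the second term I would replace $\L_{V'}\omega$ by $d\iota(V')\omega$ (again \eqref{Lie} with $d\omega=0$) and invoke the induction hypothesis for the $m-1$ fields $v_2,\dots,v_m$; after prepending $\ip{1}$ and commuting the degree-$1$ factor $v_1$ into canonical position inside each interior product, these terms reproduce precisely the $i\geq 2$ terms of \eqref{big_identity}. Adding the two contributions gives the identity.

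Every step is a routine application of the three Cartan-calculus identities, so the only real difficulty is the bookkeeping of signs. The delicate points are: matching the $(-1)^m$ emitted by \eqref{commutator} with the global sign in \eqref{big_identity}; reindexing the induction hypothesis (relabeling $v_2,\dots,v_m$ as $w_1,\dots,w_{m-1}$ converts the sign $(-1)^{(i-1)+(j-1)}$ back into $(-1)^{i+j}$); and tracking the Koszul signs incurred when reordering wedge factors so that $[v_i,v_j]$ and the surviving vectors appear in the order of \eqref{big_identity}, in particular when sliding $v_1$ past the degree-$(m-2)$ multivector and past the bracket $[v_i,v_j]$. I expect these sign verifications, rather than any conceptual step, to be where care is required.
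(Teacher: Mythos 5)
Your proof is correct---each of the sign computations you flag does work out---but it is the mirror image of the paper's induction rather than the same calculation. The paper peels off the \emph{last} field: using $\iota(v_{1}\wedge\cdots\wedge v_{m})=\iota_{v_{m}}\iota(v_{1}\wedge\cdots\wedge v_{m-1})$ and Cartan's formula for the ordinary vector field $v_{m}$, it writes $d\iota(v_{1}\wedge\cdots\wedge v_{m})\omega=\L_{v_{m}}\iota(v_{1}\wedge\cdots\wedge v_{m-1})\omega-\iota_{v_{m}}d\iota(v_{1}\wedge\cdots\wedge v_{m-1})\omega$, applies Eq.~\ref{commutator} with the vector field in the Lie-derivative slot (where the prefactor $(-1)^{(\deg{u}-1)\deg{v}}$ is $+1$), and expands $[v_{m},v_{1}\wedge\cdots\wedge v_{m-1}]$ by Eq.~\ref{Schouten} to produce the $j=m$ terms of the target sum; in the induction-hypothesis term, $\iota_{v_{m}}$ appends $v_{m}$ at the \emph{end} of each wedge, which is already its canonical position, so the paper incurs no Koszul reordering at all. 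Your route through $v_{1}$ instead needs three ingredients the paper avoids: the closedness of $\ip{1}\omega$, the Lie derivative along the degree-$(m-1)$ multivector $V'$, and the final transposition of $v_{1}$ past $m-3$ factors costing $(-1)^{m-3}$; the totals do match, since $(-1)^{m}(-1)^{m-1}(-1)^{m-3}=(-1)^{m}$ gives your $i\geq 2$ block correctly, just as $(-1)^{m}\iota([V',v_{1}])\omega$ gives the $i=1$ block. What your version buys is a transparent decomposition of the right-hand side of Eq.~\ref{big_identity} into its ``$i=1$'' and ``$i\geq 2$'' parts, plus an early use of the fact that contractions of $\omega$ by Hamiltonian fields are closed; what the paper's version buys is the complete absence of wedge-reordering signs and of Lie derivatives along multivectors. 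Both arguments have the same length and rest on the same toolkit: Eqs.~\ref{Lie}, \ref{commutator}, \ref{Schouten}, and Lemma~\ref{L_thm}.
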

\begin{proof}
We proceed via induction on $m$. For $m=2$:
\[d\iota(v_{1} \wedge v_{2})\omega=d\brac{\alpha_{1}}{\alpha_{2}},
\]
 where $\alpha_{1},\alpha_{2}$
are any Hamiltonian $(n-1)$-forms whose Hamiltonian vector
fields are $v_{1},v_{2}$, respectively. Then Proposition
\ref{bracket_prop} implies Eq.\ \ref{big_identity} holds.

Assume Eq.\ \ref{big_identity} holds for $m-1$. 
Since $\iota(v_{1} \wedge\cdots \wedge v_{m})=\iota_{v_{m}}\iota(
v_{1} \wedge\cdots \wedge v_{m-1})$, Eq.\ \ref{Lie} implies:
\begin{equation} \label{step1}
d \iota(v_{1} \wedge\cdots \wedge v_{m})\omega= \L_{v_{m}}\iota(v_{1} \wedge
  \cdots \wedge v_{m-1}) \omega- \iota_{v_{m}} d \iota(v_{1} \wedge
  \cdots \wedge v_{m-1}) \omega. 
\end{equation}
Consider the first term on the right hand side. Using Eq.\
\ref{commutator} we can rewrite it as
\begin{align*}
\L_{v_{m}}\iota(v_{1} \wedge  \cdots \wedge v_{m-1}) \omega &= 
\iota([v_{m},v_{1} \wedge \cdots \wedge v_{m-1}]) \omega \\
& \quad +\iota(v_{1} \wedge \cdots \wedge v_{m-1})
\L_{v_{m}} \omega \\
&=\iota([v_{m},v_{1} \wedge \cdots \wedge v_{m-1}]) \omega,
\end{align*}
where the last equality follows from Lemma \ref{L_thm}.

The definition of the Schouten bracket given in Eq.\
\ref{Schouten} implies
\[
[v_{m},v_{1} \wedge \cdots \wedge v_{m-1}] =\sum_{i=1}^{m-1}
(-1)^{i+1} [v_{m},v_{i}] \wedge v_{1} \wedge \cdots \wedge \hat{v}_{i}
\wedge \cdots \wedge v_{m-1}.
\]
Therefore we have
\begin{align*}
\L_{v_{m}}\iota(v_{1} \wedge  \cdots \wedge v_{m-1}) \omega
&=\iota([v_{m},v_{1} \wedge \cdots \wedge v_{m-1}]) \omega \\
&=\sum_{i=1}^{m-1}
(-1)^{i} \iota([v_{i},v_{m}] \wedge v_{1} \wedge \cdots \wedge \hat{v}_{i}
\wedge \cdots \wedge v_{m-1})\omega.
\end{align*}
Combining this with the second term in Eq.\ \ref{step1} and using the
inductive hypothesis gives
\begin{align*}
\begin{split}
d \iota(v_{1} \wedge\cdots \wedge v_{m}) \omega 
 = \sum_{i=1}^{m-1}
(-1)^{i} \iota([v_{i},v_{m}] \wedge v_{1} \wedge \cdots \wedge \hat{v}_{i}
\wedge \cdots \wedge v_{m-1})\omega   
\end{split}
\\
& \quad -(-1)^{m-1} \sum \limits_{1 \leq i < j \leq
  m-1} (-1)^{i+j} \iota_{v_{m}}\iota([v_{i},v_{j}] \wedge v_{1} \wedge\cdots \\
& \quad \wedge \hat{v}_{i} \wedge \cdots  \wedge \hat{v}_{j} \wedge \cdots \wedge v_{m-1})
\omega \\
&= (-1)^{m}\left (\sum_{i=1}^{m-1}
(-1)^{i+m} \iota([v_{i},v_{m}] \wedge v_{1} \wedge \cdots \wedge \hat{v}_{i}
\wedge \cdots \wedge v_{m-1})\omega \right.  \\
& \quad \left. + \sum \limits_{1 \leq i < j \leq
  m-1} (-1)^{i+j} \iota([v_{i},v_{j}] \wedge v_{1} \wedge \cdots \wedge
  \hat{v}_{i} \wedge \cdots \wedge \hat{v}_{j} \wedge \cdots \wedge v_{m})
\omega \right)\\
&=(-1)^{m}\sum_{1 \leq i < j \leq
  m} (-1)^{i+j} \iota([v_{i},v_{j}] \wedge v_{1} \wedge \cdots \wedge
  \hat{v}_{i} \wedge \cdots \wedge \hat{v}_{j} \wedge \cdots \wedge v_{m})
\omega. 
\end{align*}
\end{proof}

\begin{proof}[Proof of Proposition \ref{no_jacobi}]
Apply Lemma \ref{tech_lemma} with $m=3$, and use the fact 
that $v_{\brac{\alpha_{i}}{\alpha_{j}}}=[v_{\alpha_{i}},v_{\alpha_{j}}]$.
\end{proof}

\section{$L_{\infty}$-algebras}
Proposition \ref{no_jacobi} implies that we should not
expect $\ham{n-1}$ to be a Lie algebra unless $n=1$. However, the
fact that the Jacobi identity is satisfied modulo boundary terms
suggests we consider what are known as strongly homotopy Lie algebras,
or $L_{\infty}$-algebras \cite{Lada-Markl,LS}.

\begin{definition} \label{Linfty} An
{\boldmath $L_{\infty}$}{\bf-algebra} is a graded vector space $L$
equipped with a collection
\[\left \{l_{k} \maps L^{\tensor k} \to L| 1
  \leq k < \infty \right\}\]
of
skew-symmetric linear maps with  $\deg{l_{k}}=k-2$ such that
the following identity holds for $1 \leq m < \infty :$
\begin{align} \label{gen_jacobi}
   \sum_{\substack{i+j = m+1, \\ \sigma \in \Sh(i,m-i)}}
  (-1)^{\sigma}\epsilon(\sigma)(-1)^{i(j-1)} l_{j}
   (l_{i}(x_{\sigma(1)}, \dots, x_{\sigma(i)}), x_{\sigma(i+1)},
   \ldots, x_{\sigma(m)})=0.
\end{align}
\end{definition}

\begin{definition} \label{LnA} A $L_{\infty}$-algebra $(L,\{l_{k} \})$
  is a {\bf Lie} {\boldmath $n$}{\bf -algebra} iff the underlying
  graded vector space $L$ is concentrated in degrees $0,\hdots,n-1$.
\end{definition}
Note that if $(L,\{l_{k} \})$ is a Lie $n$-algebra, then by degree counting $l_{k} =0 $ for $k > n+1$. 

The identity satisfied by the maps in Definition \ref{Linfty}
can be interpreted as a `generalized  Jacobi identity'. 
Indeed, using the notation $d=l_{1}$ and $[\cdot,\cdot] = l_{2}$, Eq.\ \ref{gen_jacobi} implies
\begin{align*}
d^2&=0\\
d [x_{1},x_{2}] &= [dx_{1},x_{2}] + (-1)^{\deg{x_{1}}}[x_{1},dx_{2}].
\end{align*}
Hence the map $l_{1} \maps L \to L$ can be
interpreted as a differential, while the map $l_{2} \maps L\tensor L
\to L$ can be interpreted as a bracket. The bracket is, of course, skew
symmetric:
\[
[x_{1},x_{2}] = -(-1)^{\deg{x_{1}} \deg{x_{2}}} [x_{2},x_{1}],
\]
but does not need to satisfy the usual Jacobi identity. In fact,
Eq.\ \ref{gen_jacobi} implies:
\begin{multline*}
(-1)^{\deg{x_{1}}\deg{x_{3}}}[[x_{1},x_{2}],x_{3}] + (-1)^{\deg{x_{2}} \deg{x_{3}}}[[x_{3},x_{1}],x_{2}] + (-1)^{\deg{x_{1}}\deg{x_{2}}} [[x_{2},x_{3}],x_{1}] \\= 
(-1)^{\deg{x_{1}}\deg{x_{3}}+1} \bigl (dl_{3}(x_{1},x_{2},x_{3}) + l_{3}(dx_{1},x_{2},x_{3}) \\
+ (-1)^{\deg{x_{1}}} l_{3}(x_{1},dx_{2},x_{3})  + (-1)^{\deg{x_{1}} +
  \deg{x_{2}}} l_{3}(x_{1},x_{2},dx_{3})  \bigr ).
\end{multline*}
Therefore one can interpret the traditional Jacobi identity as a
null-homotopic chain map from $L\tensor L \tensor L$ to $L$. The
map $l_{3}$ acts as a chain homotopy and is referred to as the
\textbf{Jacobiator}. Eq.\ \ref{gen_jacobi} also implies that $l_{3}$
must satisfy a coherence condition of its own. From the above
discussion, it is easy to see that a Lie 1-algebra is an ordinary Lie
algebra, while a $L_{\infty}$-algebra with $l_{k} \equiv 0$ for all $k
\geq 3$ is a differential graded Lie algebra.

\section{The Lie $n$-algebra associated to an $n$-plectic manifold} \label{main}
There are several clues that suggest that any $n$-plectic manifold
gives a $L_{\infty}$-algebra. It was shown
in our previous work \cite{Baez:2008bu} that a Lie 2-algebra can be explicitly
constructed from the 2-plectic structure on any 2-plectic
manifold. The underlying chain complex of this Lie 2-algebra is
\[
\cinf(M) \stackrel{d}{\to} \ham{1},
\]
where $d$ is the de Rham differential. This suggests that for an
arbitrary $n$-plectic manifold, we should look for
Lie $n$-algebra structures on the chain complex 
\begin{equation} \label{complex}
\cinf(M) \stackrel{d}{\to} \Omega^{1}(M) \stackrel{d}{\to} \cdots \stackrel{d}{\to} 
\Omega^{n-2}(M) \stackrel{d}{\to} \ham{n-1},
\end{equation}
with the $l_{1}$ map equal to $d$. We denote this complex as $(L,d)$.
It is concentrated in degrees $0,\ldots,n-1$ with
\[
L_{i} =
\begin{cases}
\ham{n-1} & i=0,\\
\Omega^{n-1-i}(M) & 0 < i \leq n-1.\\
\end{cases}
\]

Note that the bracket
$\blankbrac$ given in Definition \ref{bracket_def} induces a
well-defined bracket $\blankbrac^{\prime}$ on the quotient 
\[
\g=\ham{n-1}/d\Omega^{n-2}(M),
\]
where $d\Omega^{n-2}(M)$ is the space of exact $(n-1)$-forms. This is
because the Hamiltonian vector field of an exact $(n-1)$-form is the
zero vector field. It follows from Proposition \ref{no_jacobi} that
$\left (\g,\blankbrac^{\prime} \right)$ is, in fact, a Lie algebra. 

If $M$ is contractible, then the homology of $(L,d)$ is
\begin{align*}
H_{0}(L)&=\g, \\
H_{k}(L)&=0 \quad \text{for $0<k<n-1$},\\
H_{n-1}(L)&=\R.
\end{align*}

Therefore, the augmented complex 
\begin{equation} \label{aug_complex} 0 \to \R \hookrightarrow \cinf(M)
  \stackrel{d}{\to} \Omega^{1}(M) \stackrel{d}{\to} \cdots
  \stackrel{d}{\to} \Omega^{n-2}(M) \stackrel{d}{\to} \ham{n-1}
\end{equation}
is a resolution of $\g$.

Barnich, Fulp, Lada, and Stasheff \cite{Barnich:1997ij} showed
that, in general, if $(C,\delta)$ is a resolution of a
vector space $V \cong H_{0}(C)$ and $C_{0}$ is equipped with a
skew-symmetric map $\tilde{l}_{2} \maps C_{0} \tensor C_{0} \to C_{0}$
that induces a Lie bracket on $V$, then
$\tilde{l}_{2}$ extends to an $L_{\infty}$-structure on $(C,\delta)$.
Hence we have the following proposition:

\begin{prop}\label{contract_thm}
Given a contractible $n$-plectic manifold $(M,\omega)$, there is a $L_{\infty}$-algebra
$(\tilde{L},\{l_{k} \})$ with underlying graded vector space
\[
\tilde{L}_{i} =
\begin{cases}
\ham{n-1} & i=0,\\
\Omega^{n-1-i}(M) & 0 < i \leq n-1,\\
\R & i=n,
\end{cases}
\]
and $l_{1} \maps \tilde{L} \to \tilde{L}$ defined as 
\[
l_{1}(\alpha)=
\begin{cases}
\alpha, & \text{if $\deg{\alpha}=n$} \\
d\alpha & \text{if $\deg{\alpha} \neq n$,}
\end{cases}
\]
and all higher maps  $\left \{l_{k} \maps \tilde{L}^{\tensor k} \to \tilde{L}| 2
  \leq k < \infty \right\}$ are constructed inductively by using the bracket
\[
\blankbrac \maps \tilde{L}_{0} \tensor \tilde{L}_{0} \to \tilde{L}_{0}, \quad
\brac{\alpha_{1}}{\alpha_{2}} = \ip{\alpha_{2}}\ip{\alpha_{1}} \omega,
\]
where $v_{\alpha_{1}},v_{\alpha_{2}}$ are the Hamiltonian vector fields corresponding
to the Hamiltonian forms $\alpha_{1}, \alpha_{2}$. Moreover the maps
$\{ l_{k} \}$ may be constructed so that
\[
l_{k}(\alphadk{k}) \neq 0 \quad \text{only if all $\alpha_{k}$ have
  degree 0},
\]
for $k \geq 2$. 
\end{prop}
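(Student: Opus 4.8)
The plan is to read off the existence statement directly from the theorem of Barnich, Fulp, Lada, and Stasheff quoted above, and then to establish the ``moreover'' clause by re-examining the inductive construction with a strengthened hypothesis. For existence I would use the two facts assembled just before the proposition: the augmented complex \eqref{aug_complex} is a resolution of $\left(\g,\blankbrac^{\prime}\right)$, and the skew map $\tilde{l}_{2}=\blankbrac\maps \tilde{L}_0\tensor\tilde{L}_0\to\tilde{L}_0$ descends to the Lie bracket on $\g$. Contractibility of $M$ is exactly what supplies the resolution, hence the acyclicity (equivalently, a contracting homotopy) that the inductive construction consumes.

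The content lies in the refinement, which I would prove by induction on the arity $k$, carrying the \emph{strengthened} hypothesis that each already-constructed $l_j$ with $2\le j\le k-1$ (i) vanishes unless all of its arguments lie in $\tilde{L}_0=\ham{n-1}$, and (ii) vanishes whenever one argument is an exact $(n-1)$-form. The base case is $l_2=\blankbrac$ extended by zero off $\tilde{L}_0\tensor\tilde{L}_0$. Property (ii) is immediate, since $\brac{\alpha}{\beta}=\ip{\beta}\ip{\alpha}\omega$ vanishes as soon as $v_{\alpha}$ or $v_{\beta}$ is zero and an exact form has vanishing Hamiltonian vector field. That this extension is still a chain map (the arity-$2$ case of \eqref{gen_jacobi}) is forced by the same fact: for $x\in\tilde{L}_1$ and $y\in\tilde{L}_0$ the only possibly surviving term is $\brac{dx}{y}$, which vanishes because $dx$ is exact.

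For the inductive step I would isolate in the arity-$k$ identity \eqref{gen_jacobi} the terms with $i\in\{1,k\}$ (the ones containing $l_k$) from the rest, which assemble into an expression $J_k$ built solely from $l_2,\dots,l_{k-1}$. Using hypothesis (i), a short degree count shows $J_k$ vanishes off $\tilde{L}_0^{\tensor k}$: a nonzero summand $l_j\bigl(l_i(\cdots),\cdots\bigr)$ would force every argument of both the inner and the outer bracket to have degree $0$, hence all $k$ inputs to have degree $0$. On $\tilde{L}_0^{\tensor k}$ the terms with $i=1$ drop out, since $l_1\tilde{L}_0\subset\tilde{L}_{-1}=0$, so the defining relation collapses to $d\,l_k(\vec\alpha)=-J_k(\vec\alpha)$ in $\tilde{L}_{k-2}$. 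Rather than invoke the abstract homotopy, Lemma \ref{tech_lemma} furnishes an explicit preimage: one may take $l_k(\vec\alpha)=\pm\iota(v_{\alpha_1}\wedge\cdots\wedge v_{\alpha_k})\omega$, whose de Rham differential is governed by that lemma and which manifestly vanishes as soon as some $v_{\alpha_i}=0$, so that property (ii) is inherited at stage $k$. I then set $l_k$ equal to zero off $\tilde{L}_0^{\tensor k}$, which secures (i).

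The step I expect to be the real obstacle is checking that this choice genuinely satisfies \eqref{gen_jacobi} off the all-degree-$0$ locus. There $l_k(\vec x)=0$ and $J_k(\vec x)=0$, so the only potential survivors are the $i=1$ terms $l_k(l_1 x_{\sigma(1)},x_{\sigma(2)},\dots)$; such a term can be nonzero only when exactly one argument, say $x_{\sigma(1)}$, lies in $\tilde{L}_1$ and the others in $\tilde{L}_0$, in which case $l_1 x_{\sigma(1)}=dx_{\sigma(1)}$ is an \emph{exact} $(n-1)$-form. Property (ii) is precisely what annihilates these residual terms and closes the induction. Thus the whole refinement rests on the single geometric fact that exact $(n-1)$-forms have vanishing Hamiltonian vector fields, which is what forces the higher brackets to be supported on the degree-zero part $\tilde{L}_0^{\tensor k}$.
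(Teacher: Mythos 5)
Your proposal is correct in substance, but it takes a genuinely different route for the part that carries content, and it is worth seeing what each route buys. The paper's own proof is essentially a citation: existence comes from Theorem 7 of Barnich--Fulp--Lada--Stasheff applied to the resolution \eqref{aug_complex} (this is the only place contractibility is used), and the ``moreover'' clause comes from the \emph{second remark following} that theorem, whose hypothesis is exactly the one computation you also isolate, namely $\brac{\alpha}{d\beta}=0$ for all $\alpha\in\ham{n-1}$, $\beta\in\Omega^{n-2}(M)$, which holds because exact $(n-1)$-forms have vanishing Hamiltonian vector fields. You instead re-run the inductive construction by hand, replacing the abstract contracting homotopy with the explicit preimage $\pm\iota(\vk{k})\omega$ furnished by Lemma \ref{tech_lemma}; this is, nearly verbatim, the proof of Theorem \ref{main_thm} given later in the paper (the regrouping of \eqref{gen_jacobi} into the $2\le j\le m-2$ block versus the $j\in\{1,m-1,m\}$ block, and the verification off the degree-zero locus, are the same steps). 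Your route is self-contained, produces explicit brackets rather than ones supplied by an unspecified homotopy, and in fact makes the appeal to Barnich--Fulp--Lada--Stasheff (and hence contractibility) redundant even for existence --- which is precisely the point the paper is building toward with Theorem \ref{main_thm}. What it costs: the signs you leave as ``$\pm$'' are the real computational content, since $l_k$ must be chosen with the even/odd-dependent signs $(-1)^{\frac{k}{2}+1}$, $(-1)^{\frac{k-1}{2}}$ for the relation $d\,l_k(\vec\alpha)=-J_k(\vec\alpha)$ to come out consistently with the previously fixed $l_2,\dots,l_{k-1}$; and your inductive property (ii) should be phrased as ``vanishes when some argument has zero Hamiltonian vector field'' rather than ``when some argument is exact,'' since on the augmented complex with $n=1$ the image of $l_1\maps\tilde{L}_n=\R\to\tilde{L}_{n-1}$ consists of constant functions, which are annihilated for the former reason, not the latter. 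Neither issue is a gap in the idea; both are exactly the bookkeeping the paper defers to Theorem \ref{main_thm}.
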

\begin{proof}
  The proposition follows from Theorem 7 in the paper by Barnich,
  Fulp, Lada, and Stasheff \cite{Barnich:1997ij}. Since
  for any $n$-plectic manifold,
\[
\brac{\alpha}{d \beta}=0 \quad \forall \alpha \in \ham{n-1} ~ \forall
\beta \in \Omega^{n-2}(M),
\]
the second remark following Theorem 7 in \cite{Barnich:1997ij} implies
that the maps $\{ l_{k}\}$ may be constructed so that they are trivial
when restricted to the positive-degree part of the $k$-th tensor power of $\tilde{L}$.
\end{proof}

For an arbitrary $n$-plectic manifold $(M,\omega)$, Proposition
\ref{contract_thm} guarantees the existence of $L_{\infty}$-algebras
locally. We want, of course, a global result in which the higher
$l_{k}$ maps are explicitly constructed using only the $n$-plectic
structure. Moreover, in our previous work on 2-plectic geometry, we
were able to construct by hand a Lie 2-algebra on a 2-term complex
consisting of functions and Hamiltonian 1-forms. We did not need to
use a 3-term complex consisting of constants, functions, and
Hamiltonian 1-forms. Hence in the general case, we'd expect an
$n$-plectic manifold to give a Lie $n$-algebra whose underlying
complex is $(L,d)$, instead of a Lie $(n+1)$-algebra whose underlying
complex is the $(n+1)$-term complex used in the above proposition.

We can get an intuitive sense for what the maps $l_{k} \maps L^{\tensor k} \to L$
should be by unraveling the identity given in Definition \ref{Linfty}
for small values of $m$ and momentarily disregarding signs and
summations over unshuffles. For example, if $m=2$, then Eq.\
\ref{gen_jacobi} implies that the map $l_{2} \maps L \tensor L \to L$
must satisfy:
\begin{equation} \label{l2}
l_{1}l_{2} + l_{2} l_{1}=0.
\end{equation}
Obviously we want $l_{1}$ to be the de Rham differential and $l_{2}$
to be equal to the bracket $\blankbrac$ when restricted to degree 0
elements:
\[
l_{2}(\alpha_{1},\alpha_{2}) = \pm \iota_{v_{\alpha_{2}}}
\iota_{v_{\alpha_{1}}} \omega=\brac{\alpha_{1}}{\alpha_{2}}\quad \forall \alpha_{i} \in
L_{0}=\ham{n-1}.
\]
Now consider elements of degree 1. For example, if $\alpha \in
L_{0}$ and $\beta \in
L_{1}=\Omega^{n-2}(M)$, then
$l_{2}(\alpha,d\beta)=\brac{\alpha}{d\beta} =0$.
Therefore Eq.\ \ref{l2} implies
\[
dl_{2}(\alpha,\beta)=  l_{1} l_{2}(\alpha,\beta)= 0.
\]
Hence, when restricted to elements of degree 1, $l_{2}(\alpha,\beta)$
must be a closed $(n-2)$-form. We will choose this closed
form to be 0. In fact, we will choose $l_{2}$ to vanish on all elements with degree
$>0$, since, in general, we want the $L_{\infty}$ structure to 
only depend on the de Rham differential and the $n$-plectic structure.

Now suppose $l_{2}$ is defined as above and
let $m=3$.  Then Eq.\ \ref{gen_jacobi} implies:
\begin{equation}\label{l3}
l_{1} l_{3} + l_{2} l_{2} + l_{3} l_{1} =0.
\end{equation}
On degree 0 elements, $l_{1}=0$. Therefore it's clear from Proposition
\ref{no_jacobi} that the map $l_{3} \maps L^{\tensor 3} \to L$ when
restricted to degree 0 elements must be
\[
l_{3}(\alpha_{1},\alpha_{2},\alpha_{3}) = \pm \iota(v_{\alpha_{1}}
\wedge v_{\alpha_{2}} \wedge v_{\alpha_{3}}) \omega,\] where
$v_{\alpha_{i}}$ is the Hamiltonian vector field associated to
$\alpha_{i}$.  Now consider a degree 1 element of $L \tensor L \tensor
L$, for example: $\alpha_{1} \tensor \alpha_{2} \tensor \beta \in 
\ham{n-1}\tensor \ham{n-1} \tensor \Omega^{n-2}(M)$. Since
$l_{3}(\alpha_{1},\alpha_{2},d\beta)= \pm \iota(v_{\alpha_{1}}
\wedge v_{\alpha_{2}} \wedge v_{d\beta}) \omega=0$,
and $l_{2}$ vanishes on 
the positive-degree part of the $k$-th tensor power of $L$,
Eq.\ \ref{l3} holds if and only if
\[
dl_{3}(\alpha_{1},\alpha_{2},\beta)=0.
\]
Hence, when restricted to elements of degree 1, $l_{3}(\alpha_{1},\alpha_{2},\beta)$
must be a closed $(n-2)$-form. Again, we will choose this closed
form to be 0 by forcing $l_{3}$ to vanish on all elements with degree
$>0$. 

Observations like these bring us to our main theorem.  In general, we
will define the maps $l_{k} \maps L^{\tensor k} \to L$ on degree zero
elements to be completely specified (up to sign) by the $n$-plectic
structure $\omega$:
\[
l_{k}(\alphadk{k}) = \pm \iota(\vk{k}) \omega \quad \text{if $\deg{\alphak{k}}=0$},
\]
and trivial otherwise:
\[
l_{k}(\alphadk{k}) = 0 \quad \text{if $\deg{\alphak{k}} > 0$}.
\]

\begin{theorem} \label{main_thm}
Given a $n$-plectic manifold $(M,\omega)$, there is a Lie $n$-algebra
$\Lie(M,\omega)=(L,\{l_{k} \})$ with underlying graded vector space 
\[
L_{i} =
\begin{cases}
\ham{n-1} & i=0,\\
\Omega^{n-1-i}(M) & 0 < i \leq n-1,
\end{cases}
\]
and maps  $\left \{l_{k} \maps L^{\tensor k} \to L| 1
  \leq k < \infty \right\}$ defined as
\[ 
l_{1}(\alpha)=d\alpha,
\]
if $\deg{\alpha}>0$ and
\begin{multline}
l_{k}(\alphadk{k}) =\\
\begin{cases}
0 & \text{if $\deg{\alphak{k}} > 0$}, \\
(-1)^{\frac{k}{2}+1} \iota(\vk{k}) \omega  & \text{if
  $\deg{\alphak{k}}=0$ and $k$ even},\\
(-1)^{\frac{k-1}{2}}\iota(\vk{k}) \omega  & \text{if
  $\deg{\alphak{k}}=0$ and $k$ odd},
\end{cases}
\end{multline}
for $k>1$, where $v_{\alpha_{i}}$ is the unique Hamiltonian vector field
associated to $\alpha_{i} \in \ham{n-1}$.
\end{theorem}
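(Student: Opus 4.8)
The plan is to verify directly that the collection $\{l_k\}$ meets the two requirements of Definition \ref{Linfty}: each $l_k$ is skew-symmetric of degree $k-2$, and the generalized Jacobi identity \ref{gen_jacobi} holds for every $m\ge 1$. Throughout I would write $c_k$ for the sign $(-1)^{k/2+1}$ ($k$ even) or $(-1)^{(k-1)/2}$ ($k$ odd), so that $l_k(\alphadk{k})=c_k\,\iota(\vk{k})\omega$ on degree-zero inputs. The degree count is immediate: on degree-zero arguments $l_k$ produces the $(n+1-k)$-form $\iota(\vk{k})\omega$, which lies in $L_{k-2}=\Omega^{n+1-k}(M)$ (and in $\ham{n-1}$ when $k=2$, by Proposition \ref{bracket_prop}), while $l_1=d$ has degree $-1$. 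Skew-symmetry on degree-zero inputs follows because $\alpha\mapsto v_\alpha$ is linear and the wedge of vector fields is antisymmetric, so permuting the arguments produces exactly the sign $(-1)^\sigma$, the Koszul sign $\epsilon(\sigma)$ being trivial on degree-zero elements; on inputs of positive total degree both sides vanish by definition.

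The heart of the matter is \ref{gen_jacobi}, which I would establish by a case analysis on how many of the inputs $x_1,\dots,x_m$ have positive degree, using repeatedly that $l_k$ ($k\ge2$) vanishes unless all of its arguments have degree $0$, that $l_1$ vanishes on $L_0$, and that an exact form has zero Hamiltonian vector field. If at least two inputs have positive degree, every summand of \ref{gen_jacobi} vanishes term by term: in a composite $l_j(l_i(\dots),\dots)$ the two positive-degree inputs can neither both be fed to a single $l_{\ge 2}$ (which needs all degree-zero arguments) nor be split between inner and outer brackets, so one of $l_i,l_j$ is forced to act on a positive-degree argument and dies. If exactly one input $x_p$ has positive degree, the same reasoning leaves only the possibility that $l_1=d$ is applied to $x_p$ with the result fed into an $l_m$; this survives the degree constraints only when $\deg{x_p}=1$, in which case $dx_p$ is exact, so $v_{dx_p}=0$ and $l_m(dx_p,\dots)=\pm\iota(v_{dx_p}\wedge\cdots)\omega=0$. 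Hence \ref{gen_jacobi} holds trivially whenever any input has positive degree.

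The remaining case, with all $x_i=\alpha_i\in\ham{n-1}$ of degree $0$, is where Lemma \ref{tech_lemma} does the work. Here only two families of summands survive (for $m\ge 3$): the inner-bracket terms with $i=2$, which by Proposition \ref{bracket_prop} read $c_{m-1}\,\iota([v_{\alpha_{\sigma(1)}},v_{\alpha_{\sigma(2)}}]\wedge v_{\alpha_{\sigma(3)}}\wedge\cdots\wedge v_{\alpha_{\sigma(m)}})\omega$ summed over $(2,m-2)$-unshuffles, and the single term $i=m,\ j=1$, equal to $d\,l_m(\alphadk{m})=c_m\,d\iota(\vk{m})\omega$, which I would expand using \ref{big_identity}. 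The reduction then needs two computations. First, the unshuffle $\sigma$ selecting the pair $i<j$ to the front has $i+j-3$ inversions, hence sign $(-1)^\sigma=-(-1)^{i+j}$, so the $i=2$ family equals $-c_{m-1}\sum_{i<j}(-1)^{i+j}\iota([v_i,v_j]\wedge\cdots)\omega$. Second, \ref{big_identity} rewrites the $i=m$ term as $c_m(-1)^m\sum_{i<j}(-1)^{i+j}\iota([v_i,v_j]\wedge\cdots)\omega$. The two families therefore cancel precisely when the structure constants satisfy $(-1)^m c_m=c_{m-1}$, which a short parity check confirms for both parities of $m$, proving \ref{gen_jacobi}.

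I expect the sign bookkeeping, rather than any geometric input, to be the main obstacle: Lemma \ref{tech_lemma} already packages the only nontrivial differential-geometric identity, so the real work is reconciling the prefactor $(-1)^\sigma\epsilon(\sigma)(-1)^{i(j-1)}$ of \ref{gen_jacobi} with the unshuffle sign and with the case-dependent constants $c_k$, and checking the boundary values for consistency. The case $m=2$ degenerates, since the surviving family reduces to $l_1l_2(\alpha_1,\alpha_2)$, which vanishes because $l_2$ lands in degree $0$ where $l_1\equiv 0$; and for $m>n+1$ the identity becomes $0=0$ because $l_m$ (or the relevant interior products) vanish for dimensional reasons, the residual unshuffle sum at $m=n+2$ being forced to vanish by \ref{big_identity} applied to $n+2$ vector fields.
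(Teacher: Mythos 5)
Your proposal is correct and follows essentially the same route as the paper's proof: verify skew-symmetry and the degree count directly, kill every summand of the generalized Jacobi identity involving a positive-degree input by degree counting (together with the fact that exact $(n-1)$-forms have zero Hamiltonian vector field), and reduce the all-degree-zero case to Lemma \ref{tech_lemma}, with the same unshuffle sign $(-1)^{i+j-1}$ and the same parity check on the structure constants. The only differences are presentational: you organize the case analysis by the number of positive-degree inputs where the paper regroups the sum by the index $j$, and you compress the even/odd sign verification into the single identity $(-1)^{m}c_{m}=c_{m-1}$, which matches the paper's two-case computation.
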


\begin{proof}[Proof of Theorem \ref{main_thm}]
We begin by showing the maps $\{l_{k}\}$
are well-defined skew symmetric maps with $\deg{l_{k}}=k-2$.
If $\alphak{k} \in L^{\tensor \bullet}$ has
degree 0, then for all $\sigma \in \Sn_{k}$ the antisymmetry of
$\omega$ implies
\[
l_{k}(\alphasdk{k})=(-1)^{\sigma} l_{k}(\alphadk{k}).
\]
Since for each $i$, we have $\deg{\alpha_{i}}=0$, it follows that
$\epsilon(\sigma)=1$. Hence $l_{k}$ is skew symmetric and
well-defined. Since $\iota(\vk{k}) \omega \in
\Omega^{n+1-k}(M)=L_{k-2}$, we have $\deg{l_{k}}=k-2$.
We also have, by construction, $l_{k} = 0 $ for $k>n+1$.

Now we prove the maps satisfy Eq.\ \ref{gen_jacobi} in Definition
\ref{Linfty}. If $m=1$, then it is satisfied since $l_{1}$
is the de Rham differential. If $m=2$, then a direct calculation shows
\[
l_{1}(l_{2}(\alpha_{1},\alpha_{2})) = l_{2}(l_{1}(\alpha_{1}),\alpha_{2}) + (-1)^{\deg{\alpha_{1}}}l_{2}(\alpha_{1},l_{1}(\alpha_{2})).
\]
Let $m > 2$. We will regroup the summands
in Eq.\ \ref{gen_jacobi} into two separate sums depending on the value
of the index $j$ and show that each of these is zero, thereby proving the
theorem.

We first consider the sum of the terms with $2 \leq j \leq m-2$:
\begin{equation} \label{term1}
\sum_{j=2}^{m-2}
   \sum_{\sigma \in \Sh(i,m-i)} \negthickspace \negthickspace \negthickspace
   (-1)^{\sigma}\epsilon(\sigma)(-1)^{i(j-1)} l_{j}
   (l_{i}(\alpha_{\sigma(1)}, \dots, \alpha_{\sigma(i)}), \alpha_{\sigma(i+1)},
   \ldots, \alpha_{\sigma(m)}).
\end{equation}
In this case we claim that for all $\sigma \in \Sh(i,m-i)$ we have 
\[
l_{j}(l_{i}(\alphasdk{i}),\alpha_{\sigma(i+1)},\hdots,\alpha_{\sigma(m)})=0.
\]
Indeed, if there exists an unshuffle such that the above
equality did not hold, then the definition of $l_{j} \maps L^{\tensor
  j} \to L$ implies
\[
\deg{l_{i}(\alphasdk{i}) \tensor \alpha_{\sigma(i+1)} \tensor \cdots \tensor \alpha_{\sigma(m)}}=0,
\] 
which further implies
\begin{equation} \label{step2}
\deg{l_{i}(\alphasdk{i})}=\deg{\alphask{i}} + i -2 =0.
\end{equation}
By assumption, $l_{i}(\alphasdk{i})$ must be non-zero and $j < m-1$
implies $i>1$. Hence we must have $\deg{\alphask{i}}=0$ and
therefore, by Eq.\ \ref{step2}, $i=2$. But this
implies $j=m-1$, which contradicts our bounds on $j$. So no such
unshuffle could exist, and therefore the sum (\ref{term1}) is zero.  

We next consider the sum of the terms $j=1$, $j=m-1$, and $j=m$:
\begin{equation} \label{the_sum}
\begin{split}
l_{1}( l_{m}(\alphadk{m})) + \sum_{\sigma \in \Sh(2,m-2)} 
(-1)^{\sigma}
\epsilon(\sigma)
l_{m-1}(l_{2}(\alpha_{\sigma(1)},\alpha_{\sigma(2)}),\alpha_{\sigma(3)},\hdots,\alpha_{\sigma(m)})
 \\
+ \sum_{\sigma \in \Sh(1,m-1)}  \negthickspace \negthickspace \negthickspace
(-1)^{\sigma}
\epsilon(\sigma)  (-1)^{m-1} 
l_{m}(l_{1}(\alpha_{\sigma(1)}),\alpha_{\sigma(2)},\hdots,\alpha_{\sigma(m)}).
\end{split}
\end{equation}
Note that if $\sigma \in \Sh(1,m-1)$ and $\deg{l_{1}(\alpha_{\sigma(1)})} >0$, then
\[
l_{m}(l_{1}(\alpha_{\sigma(1)}),\alpha_{\sigma(2)},\hdots,\alpha_{\sigma(m)})=0
\]
by definition of the map $l_{m}$. On the other hand, 
if $\deg{l_{1}(\alpha_{\sigma(1)})} =0$, then
$l_{1}(\alpha_{\sigma(1)})=d \alpha_{\sigma(1)}$ is Hamiltonian and
its Hamiltonian vector field is the zero vector field. Hence the third
term in (\ref{the_sum}) is zero. 

Since the map $l_{2}$ is degree 0, we only need to consider the
first two terms of (\ref{the_sum}) in the case when $\deg{\alphak{m}}
=0$. For the first term we have:
\[
l_{1}( l_{m}(\alphadk{m}))=
\begin{cases}
(-1)^{\frac{m}{2}+1} d\iota(\vk{m}) \omega & \text{if $m$ even},\\
(-1)^{\frac{m-1}{2}}d\iota(\vk{m}) \omega  & \text{if $m$ odd}.
\end{cases}
\]
Now consider the second term. If $\alpha_{i},\alpha_{j} \in \ham{n-1}$ are Hamiltonian
$(n-1)$-forms then by Definition \ref{bracket_def},
$l_{2}(\alpha_{i},\alpha_{j})=\brac{\alpha_{i}}{\alpha_{j}}$.
By Proposition \ref{bracket_prop}, $l_{2}(\alpha_{i},\alpha_{j})$ is
Hamiltonian and its Hamiltonian vector field is $v_{\brac{\alpha_{i}}{\alpha_{j}}}=[v_{\alpha_{i}},v_{\alpha_{j}}]$.
Therefore for  $\sigma \in \Sh(2,m-2)$, we have
\begin{multline*}
l_{m-1}(l_{2}(\alpha_{\sigma(1)},\alpha_{\sigma(2)}),\alpha_{\sigma(3)},\hdots,\alpha_{\sigma(m)})=\\
\begin{cases}
(-1)^{\frac{m}{2}-1}\iota([v_{\alpha_{\sigma(1)}},v_{\alpha_{\sigma(2)}}] \wedge
  \cdots \wedge v_{\alpha_{\sigma(m)}}) \omega & \text{if $m$ even},\\
(-1)^{\frac{m+1}{2}}\iota([v_{\alpha_{\sigma(1)}},v_{\alpha_{\sigma(2)}}] \wedge
  \cdots \wedge v_{\alpha_{\sigma(m)}}) \omega  & \text{if $m$ odd}.
\end{cases}
 \end{multline*}
Since each $\alpha_{i}$ is degree 0, we can rewrite the sum over
$\sigma \in \Sh(2,m-2)$ as
\begin{multline*}
\sum_{\sigma \in \Sh(2,m-2)}
(-1)^{\sigma}
\epsilon(\sigma)
l_{m-1}(l_{2}(\alpha_{\sigma(1)},\alpha_{\sigma(2)}),\alpha_{\sigma(3)},\hdots,\alpha_{\sigma(m)}) =\\
\sum_{1\leq i<j \leq m} (-1)^{i+j-1} l_{m-1}(l_{2}(\alpha_{i},\alpha_{j}),\alpha_{1}, \alpha_{2},\hdots,
\hat{\alpha}_{i},\hdots,\hat{\alpha}_{j},\hdots,\alpha_{m}).
\end{multline*}
Therefore, if $m$ is even, the sum (\ref{the_sum}) becomes
\begin{multline*}
(-1)^{\frac{m}{2}+1} d\iota(\vk{m}) \omega  
+ (-1)^{\frac{m}{2}} \sum_{1\leq i<j \leq m} (-1)^{i+j} \iota([v_{\alpha_{i}},v_{\alpha_{j}}]
  \wedge v_{\alpha_{1}} \\ \wedge  \cdots 
\wedge \hat{v}_{\alpha_{i}} \wedge \cdots \wedge
  \hat{v}_{\alpha_{j}} \wedge \cdots \wedge v_{\alpha_{m}}) \omega
\end{multline*}
and, if $m$ is odd:
\begin{multline*}
(-1)^{\frac{m-1}{2}} d\iota(\vk{m}) \omega  
+ (-1)^{\frac{m-1}{2}} \sum_{1\leq i<j \leq m} (-1)^{i+j} \iota([v_{\alpha_{i}},v_{\alpha_{j}}]
  \wedge v_{\alpha_{1}} \\ \wedge
  \cdots \wedge \hat{v}_{\alpha_{i}} \wedge \cdots  \wedge
  \hat{v}_{\alpha_{j}} \wedge \cdots \wedge v_{\alpha_{m}}) \omega.
\end{multline*}
It then follows from Lemma \ref{tech_lemma} that, in either case, (\ref{the_sum}) is zero.
\end{proof}

It is clear that in the $n=1$ case, $\Lie(M,\omega)$ is the underlying
Lie algebra of the usual Poisson algebra of smooth functions on a
symplectic manifold. In the $n=2$ case, $\Lie(M,\omega)$ is the Lie
2-algebra obtained in our previous work with Baez and Hoffnung
\cite{Baez:2008bu}.  

Note that the equality
\[
    d\brac{\alpha}{\beta} = -\iota_{[v_{\alpha},v_{\beta}]} \omega
\]
given in Proposition \ref{bracket_prop} implies the existence of a
bracket-preserving chain map
\[
\phi \maps \Lie(M,\omega) \to \VectH \left(M \right),
\]
which in degree 0 takes a Hamiltonian $(n-1)$-form 
$\alpha$ to its vector field $v_{\alpha}$. 
Here we consider the Lie algebra of Hamiltonian vector fields as a Lie
1-algebra whose underlying complex is concentrated in degree 0:
\[
\ldots \to 0 \to 0 \to \VectH \left(M \right).
\]
Hence $\phi$ is trivial in all higher degrees. In light of
Theorem \ref{main_thm}, $\phi$ becomes a strict morphism of
$L_{\infty}$-algebras. (See the paper by Lada and Markl \cite{Lada-Markl} for the
definition of  $L_{\infty}$-algebra morphisms).

\section{The dg Leibniz algebra associated to an $n$-plectic manifold}
In symplectic geometry, every
function $f \in \cinf(M)$ is Hamiltonian.  We also have the equality:
\begin{equation} \label{poisson}
\brac{f}{g}= \ip{f}dg= \L_{v_{f}}g
\end{equation}
for all $f,g \in \ham{0}=\cinf(M)$. Hence $\brac{f}{\cdot}$ is a
degree zero derivation on $\ham{0}$, which makes
$(\ham{0},\blankbrac)$ a Poisson algebra. In general, for $n >1$, 
an equality such as Eq.\ \ref{poisson} does not hold, and
Hamiltonian forms are obviously not closed under wedge product.
Therefore, we shouldn't expect the Lie $n$-algebra $\Lie(M,\omega)$ to
behave like a Poisson algebra. But we do have the following simple lemma:
\begin{lemma} \label{der_lemma}
Let $(M,\omega)$ be an $n$-plectic manifold. If $\alpha,\beta \in
\ham{n-1}$ are Hamiltonian forms, then
\[
\L_{v_{\alpha}} \beta= \brac{\alpha}{\beta} + d \ip{\alpha} \beta.
\]
\end{lemma}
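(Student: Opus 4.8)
The plan is to reduce everything to the Cartan magic formula. Since $v_{\alpha}$ is an ordinary vector field it has degree $1$ as a multivector, so the Lie derivative formula Eq.\ \ref{Lie} specializes to the familiar identity
\[
\L_{v_{\alpha}} \beta = d\,\ip{\alpha}\beta + \ip{\alpha}\,d\beta,
\]
where I have used that $\iota(v_{\alpha})=\ip{\alpha}$ by Eq.\ \ref{interior}, and that $-(-1)^{\deg{v_{\alpha}}}=-(-1)=+1$. First I would write down this expression and observe that the term $d\,\ip{\alpha}\beta$ already matches one summand of the claimed identity, so the only thing left to rewrite is $\ip{\alpha}\,d\beta$.

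Next, because $\beta \in \ham{n-1}$ is Hamiltonian, Definition \ref{hamiltonian} gives $d\beta = -\ip{\beta}\omega$. Substituting this yields $\ip{\alpha}\,d\beta = -\ip{\alpha}\ip{\beta}\omega$. The one genuine manipulation is then to commute the two interior products: the interior product of a vector field is an odd (degree $-1$) operator, so $\ip{\alpha}$ and $\ip{\beta}$ anticommute, giving $\ip{\alpha}\ip{\beta}\omega = -\ip{\beta}\ip{\alpha}\omega$. Hence $\ip{\alpha}\,d\beta = \ip{\beta}\ip{\alpha}\omega$, which is precisely $\brac{\alpha}{\beta}$ by Definition \ref{bracket_def}. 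Assembling the pieces gives $\L_{v_{\alpha}} \beta = \brac{\alpha}{\beta} + d\,\ip{\alpha}\beta$, which is the assertion.

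The only point requiring care is sign bookkeeping: I would verify that the sign coming from Cartan's formula (the $-(-1)^{\deg{v}}$ in Eq.\ \ref{Lie}) and the sign from anticommuting the two interior products combine to leave a net $+\brac{\alpha}{\beta}$, rather than $-\brac{\alpha}{\beta}$. I expect no real obstacle here, since both sign conventions are fixed in Section 2. It is worth noting that the extra exact term $d\,\ip{\alpha}\beta$ is genuinely present for $n>1$ and is exactly what prevents $\L_{v_{\alpha}}\beta$ from coinciding with the bracket on the nose, in contrast to the symplectic case Eq.\ \ref{poisson}; this is consistent with the discussion preceding the lemma.
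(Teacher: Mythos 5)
Your proof is correct and follows essentially the same route as the paper's: Cartan's magic formula, substitution of $d\beta = -\ip{\beta}\omega$ from Definition \ref{hamiltonian}, and anticommutation of the two interior products to recover $\brac{\alpha}{\beta} = \ip{\beta}\ip{\alpha}\omega$ from Definition \ref{bracket_def}. Your sign bookkeeping is also right, so there is nothing to add.
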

\begin{proof}
Definitions \ref{hamiltonian} and \ref{bracket_def} imply:
\begin{align*}
\L_{v_{\alpha}} \beta &= \ip{\alpha} d\beta + d\ip{\alpha} \beta \\
&= -\ip{\alpha} \ip{\beta}\omega + d\ip{\alpha} \beta \\
&=\brac{\alpha}{\beta} + d \ip{\alpha} \beta.
\end{align*}
\end{proof}

Lemma \ref{der_lemma} suggests that we interpret the $(n-1)$-form
$\L_{v_{\alpha}}\beta$ as a type of bracket on $\ham{n-1}$, equal to
the bracket given in Definition \ref{bracket_def} modulo boundary
terms. To this end, we consider an algebraic structure known as a
differential graded (dg) Leibniz algebra. 

\begin{definition} \label{dg_leibniz}
A {\bf differential graded Leibniz algebra}
$(L,\delta,\cbrac{\cdot}{\cdot})$ is a graded vector
space $L$ equipped with a degree -1 linear map $\delta \maps L \to
L$ and a degree 0 bilinear map $\cbrac{\cdot}{\cdot} \maps L \tensor L
\to L$ such that the following identities hold:
\begin{gather}
\delta \circ \delta =0 \\
\delta \cbrac{x}{y} = \cbrac{\delta x}{y} +
(-1)^{\deg{x}}\cbrac{x}{\delta y} \label{bracket_der}\\
\cbrac{x}{\cbrac{y}{z}}= \cbrac{\cbrac{x}{y}}{z} + (-1)^{\deg{x}\deg{y}}
\cbrac{y}{\cbrac{x}{z}}\label{leib_jacobi},
\end{gather}
for all $x,y,z \in L$.
\end{definition}
In the literature, dg Leibniz algebras are also called dg Loday algebras.
This definition presented here is equivalent to the one given by Ammar and Poncin
\cite{Ammar:2008}. Note that the second condition given in the
definition above can be interpreted as the Jacobi identity. Hence 
if the bilinear map $\cbrac{\cdot}{\cdot}$ is skew-symmetric,
then a dg Leibniz algebra is a DGLA.

We now show that every $n$-plectic manifold gives a dg Leibniz
algebra.
\begin{prop} \label{n-plectic_Leibniz}
Given an $n$-plectic manifold $(M,\omega)$, there is a differential
graded Leibniz algebra $\Leib(M,\omega)=(L,\delta,\cbrac{\cdot}{\cdot})$ with underlying
graded vector space 
\[
L_{i} =
\begin{cases}
\ham{n-1} & i=0,\\
\Omega^{n-1-i}(M) & 0 < i \leq n-1,
\end{cases}
\]
and maps $ \delta \maps L \to L$, $\cbrac{\cdot}{\cdot} \maps L
\tensor L \to L$ defined as
\[
\delta(\alpha)=d \alpha,
\]
if $\deg{\alpha} > 0$ and
\[
\cbrac{\alpha}{\beta}=
\begin{cases}
\L_{v_{\alpha}}\beta & \text{if $\deg{\alpha}=0$,}\\
0 & \text{if $\deg{\alpha} >0$,}
\end{cases}
\]
where $v_{\alpha}$ is the Hamiltonian vector field associated to $\alpha$.
\end{prop}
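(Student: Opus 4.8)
The plan is to verify the three identities of Definition \ref{dg_leibniz} directly, after first checking that $\delta$ and $\cbrac{\cdot}{\cdot}$ are well-defined and carry the stated degrees. The guiding observation is that $\cbrac{\alpha}{\cdot}$ is nothing but the Lie derivative $\L_{v_\alpha}$ when $\deg{\alpha}=0$, and is identically zero when $\deg{\alpha}>0$; consequently everything reduces to standard Cartan-calculus identities for $\L_{v_\alpha}$. For well-definedness the only points needing care are that $\delta$ sends $L_1=\Omega^{n-2}(M)$ into $L_0=\ham{n-1}$ and that $\cbrac{\alpha}{\cdot}$ preserves $L_0$. The former holds because $d\beta$ is exact, hence closed and therefore Hamiltonian with zero Hamiltonian vector field, as noted after Definition \ref{hamiltonian}. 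For the latter I would invoke Lemma \ref{der_lemma} to write $\L_{v_\alpha}\beta=\brac{\alpha}{\beta}+d\ip{\alpha}\beta$; since $\brac{\alpha}{\beta}$ is Hamiltonian by Proposition \ref{bracket_prop} and $d\ip{\alpha}\beta$ is exact, the sum is Hamiltonian. On positive-degree forms $\L_{v_\alpha}$ preserves the de Rham degree, so $\cbrac{\cdot}{\cdot}$ has degree $0$; and $\delta$, which vanishes on $L_0$ since $d\alpha=-\ip{\alpha}\omega$ lies outside the complex, has degree $-1$.

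The identity $\delta^2=0$ is immediate: on $L_i$ with $i\geq 2$ it is $d^2=0$, while on $L_1$ and $L_0$ the second application of $\delta$ lands in degree $0$, where $\delta$ is zero. For the derivation rule (\ref{bracket_der}) I would split on the degree of $x$. When $\deg{x}=0$, say $x=\alpha$, the term $\cbrac{\delta x}{y}$ drops out because $\delta\alpha=0$, and the identity becomes the commutation of $\delta$ with $\L_{v_\alpha}$; this is the standard fact $d\L_{v_\alpha}=\L_{v_\alpha}d$ for positive-degree $y$, and for $\deg{y}=0$ both sides vanish since $\delta\beta=0$. When $\deg{x}>0$ the left side vanishes because $\cbrac{x}{y}=0$; on the right, $\cbrac{x}{\delta y}=0$ for the same reason, and $\cbrac{\delta x}{y}=0$ because either $\deg{\delta x}>0$, or $\deg{x}=1$ and then $\delta x=dx$ is exact, hence carries the zero Hamiltonian vector field and kills the Lie derivative.

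The Leibniz--Jacobi identity (\ref{leib_jacobi}) is the heart of the argument, and again I would organize it by degrees. Because $\cbrac{x}{\cdot}\equiv 0$ unless $\deg{x}=0$, both the cases $\deg{x}>0$ and ($\deg{x}=0$, $\deg{y}>0$) give three vanishing terms: in the first, every bracket has a positive-degree first slot; in the second, $\cbrac{y}{z}=0$, $\cbrac{\L_{v_\alpha}y}{z}=0$ (as $\L_{v_\alpha}y$ retains the positive degree of $y$), and $\cbrac{y}{\cbrac{\alpha}{z}}=0$. The essential case is $\deg{x}=\deg{y}=0$, with $x=\alpha$, $y=\beta$ Hamiltonian and the sign $(-1)^{\deg{x}\deg{y}}=1$. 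Here the three terms are $\L_{v_\alpha}\L_{v_\beta}z$, $\cbrac{\L_{v_\alpha}\beta}{z}$, and $\L_{v_\beta}\L_{v_\alpha}z$. The key step is to identify the Hamiltonian vector field of $\cbrac{\alpha}{\beta}=\L_{v_\alpha}\beta$: from the decomposition above and Proposition \ref{bracket_prop}, the exact summand contributes nothing and $v_{\L_{v_\alpha}\beta}=[v_\alpha,v_\beta]$, so $\cbrac{\L_{v_\alpha}\beta}{z}=\L_{[v_\alpha,v_\beta]}z$. The identity then collapses to the standard commutator formula $\L_{v_\alpha}\L_{v_\beta}-\L_{v_\beta}\L_{v_\alpha}=\L_{[v_\alpha,v_\beta]}$ for Lie derivatives of forms.

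The step I expect to be the main obstacle is exactly this essential case of (\ref{leib_jacobi}): correctly identifying $v_{\cbrac{\alpha}{\beta}}=[v_\alpha,v_\beta]$ so that the Loday--Jacobi identity becomes the Lie-derivative commutator formula. Everything else is degree bookkeeping together with the repeated use of two facts: that the bracket annihilates positive-degree first arguments, and that exact $(n-1)$-forms carry the zero Hamiltonian vector field.
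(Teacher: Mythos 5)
Your proposal is correct and follows essentially the same route as the paper's proof: well-definedness via Lemma \ref{der_lemma} and the vanishing Hamiltonian vector field of exact forms, the derivation rule by the same degree case analysis, and the Leibniz--Jacobi identity reduced, via $v_{\cbrac{\alpha}{\beta}}=[v_{\alpha},v_{\beta}]$ from Proposition \ref{bracket_prop}, to the commutator formula $\L_{[v_{\alpha},v_{\beta}]}=\L_{v_{\alpha}}\L_{v_{\beta}}-\L_{v_{\beta}}\L_{v_{\alpha}}$. The only cosmetic difference is that you phrase well-definedness through the decomposition $\L_{v_{\alpha}}\beta=\brac{\alpha}{\beta}+d\ip{\alpha}\beta$ while the paper states directly that $d\cbrac{\alpha}{\beta}=-\iota_{[v_{\alpha},v_{\beta}]}\omega$; these are the same observation.
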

\begin{proof}
If $\alpha,\beta \in L_{0}=\ham{n-1}$ are Hamiltonian, then Lemma \ref{der_lemma}
implies
$d\cbrac{\alpha}{\beta}=d\brac{\alpha}{\beta}=-\iota_{[v_{\alpha},v_{\beta}]} \omega$.
Hence $\cbrac{\alpha}{\beta}$ is Hamiltonian. For $\deg{\beta} > 0$,
we have $\deg{\L_{v_{\alpha}}\beta}=\deg{\beta}$, since the Lie
derivative is a degree zero derivation. Hence $\cbrac{\cdot}{\cdot}$
is a bilinear degree 0 map. 

We next show that Eq.\ \ref{bracket_der} of Definition \ref{dg_leibniz}
holds. If $ \deg{\alpha} > 1$, then it holds
trivially. If $\deg{\alpha} =1$, then
$\cbrac{\alpha}{\beta}=\cbrac{\alpha}{\delta\beta}=0$ for all $\beta \in L$
by definition, and $\cbrac{\delta \alpha}{\beta}=0$ since the
Hamiltonian vector field associated to $d\alpha$ is zero. If
$\deg{\alpha}=0$ and $\deg{\beta}=0$, then
$\deg{\cbrac{\alpha}{\beta}}=0$. Hence all terms in
\eqref{bracket_der} vanish by definition. The last case to
consider is
$\deg{\alpha}=0$ and $\deg{\beta} > 0$. We have 
\[
\delta \cbrac{\alpha}{\beta}= d \L_{v_{\alpha}}\beta =
\L_{v_{\alpha}} d\beta =\cbrac{\alpha}{\delta \beta}.
\]

Finally, we show the Jacobi identity \eqref{leib_jacobi}
holds. Let $\alpha, \beta, \gamma \in L$.  Then the left hand side of
\eqref{leib_jacobi} is $\cbrac{\alpha}{\cbrac{\beta}{\gamma}}$, while
the right hand side is $\cbrac{\cbrac{\alpha}{\beta}}{\gamma} +
(-1)^{\deg{\alpha}\deg{\beta}}\cbrac{\beta}{\cbrac{\alpha}{\gamma}}$. Note equality holds trivially
if $\deg{\alpha} >0$ or $\deg{\beta} >0$. Otherwise, we use the
identity
\[
\L_{[v_{1},v_{2}]} = \L_{v_{1}} \L_{v_{2}} - \L_{v_{2}}\L_{v_{1}},
\]
and the fact that $d\cbrac{\alpha}{\beta}=-\iota_{[v_{\alpha},v_{\beta}]} \omega$
to obtain the following equalities:
\begin{align*}
\cbrac{\alpha}{\cbrac{\beta}{\gamma}} &= \L_{v_{\alpha}}
\L_{v_{\beta}} \gamma\\
&=\L_{[v_{\alpha},v_{\beta}]}\gamma + \L_{v_{\beta}}
\L_{v_{\alpha}} \gamma\\
&=\cbrac{\cbrac{\alpha}{\beta}}{\gamma} + 
\cbrac{\beta}{\cbrac{\alpha}{\gamma}}.
\end{align*}
\end{proof}
One interesting aspect of the dg Leibniz structure is that it
interprets the bracket of Hamiltonian $(n-1)$-forms geometrically as
the change of an observable along the flow of a Hamiltonian vector
field. Leibniz algebras, in fact, naturally arise in a
variety of geometric settings e.g.\  in Courant algebroid theory
and, more generally, in the derived bracket
formalism \cite{KosSchwarz:2004}.  It would be interesting to compare
$\Leib(M,\omega)$ to the Leibniz algebras that appear in these other
formalisms.

\section{Concluding remarks and open questions}

\subsection{Applications of Theorem \ref{main_thm}}
We wish to remark that Theorem \ref{main_thm} implies that one can
assign a $L_{\infty}$-algebra to each of the multisymplectic manifolds
mentioned in the introduction. These algebraic structures may be of
interest in their own right.  For example, if $(M,\omega)$ is a
compact, connected, oriented $(n+1)$-dimensional manifold equipped
with a volume form $\omega$, then Zambon \cite{Zambon:2010ka} showed
that the isomorphism class of the Lie $n$-algebra $\Lie(M,\omega)$ is
independent of the choice of $\omega$ and therefore only depends on
the manifold $M$.

Another example comes from representation theory and quantum groups.
Given a simple finite-dimensional Lie algebra $\g$ of type $ADE$, one
can construct certain hyper-K\"{a}hler manifolds known as
`Nakajima quiver varieties' \cite{Nakajima:1994}. These can be
used to study the finite-dimensional representations of the quantum
enveloping algebra of the affine Lie algebra corresponding to $\g$
\cite{Nakajima:2000}. As mentioned earlier, every hyper-K\"{a}hler
manifold is 3-plectic. Therefore we can associate a Lie 3-algebra to
any Nakajima quiver variety. It would be interesting to see how these
Lie 3-algebras are related to the representation-theoretic structures
encoded in these varieties.

\subsection{Lie $n$-algebras and dg Leibniz algebras}
By extending the work of Baez and Crans \cite{HDA6}, Roytenberg
\cite{Roytenberg_L2A} developed what are known as 2-term weak
$L_{\infty}$-algebras, or `weak Lie 2-algebras'. In a weak Lie
2-algebra, the skew symmetry condition on the maps given in Definition
\ref{Linfty} is relaxed. In particular, the bilinear map $l_{2} \maps
L \tensor L \to L$ is skew-symmetric only up to homotopy. 
This homotopy must satisfy a coherence condition, as well as compatibility
conditions with the homotopy that controls the failure of the Jacobi identity.
Lie 2-algebras in the sense of Definition \ref{LnA} 
are weak Lie 2-algebras that satisfy skew-symmetry on the
nose. They are called `semi-strict Lie 2-algebras' in this context,
since the Jacobi identity may still fail to hold.
Weak Lie 2-algebras that satisfy a Jacobi identity of the form
\[ 
[x,[y,z]]-[[x,y],z] - [y,[x,z]] =0,
\]
but not necessarily satisfy the skew-symmetry condition,
are called `hemi-strict Lie 2-algebras'. In fact, any
hemi-strict Lie 2-algebra is a 2-term dg Leibniz algebra.

Given an $n$-plectic manifold $(M,\omega)$, it is easy to show that
the bracket of degree 0 elements in the dg Leibniz algebra $\Leib(M,\omega)$ is
skew-symmetric up to an exact $(n-1)$ form:
\[
\cbrac{\alpha}{\beta} + \cbrac{\beta}{\alpha} = d \left( \ip{\alpha}
  \beta + \ip{\beta} \alpha \right).
\]
When $n=2$, we showed in our previous work with Baez and Hoffnung
\cite{Baez:2008bu} that $\Leib(M,\omega)$ is a hemi-strict Lie
2-algebra, and the map
\begin{gather*}
L_{0} \tensor L_{0} \to L_{1}\\
\alpha \tensor \beta \mapsto \left( \ip{\alpha}
  \beta + \ip{\beta} \alpha \right)
\end{gather*}
is the relevant homotopy. We then showed that Roytenberg's definition of morphism is
flexible enough to allow the identity map
on the underlying chain complexes to lift to an actual isomorphism:
\[
\Lie(M,\omega) \cong \Leib(M,\omega) \quad (\text{for} ~ n=2),
\]
in the category of weak Lie 2-algebras. In general, we would like to
conjecture that some sort of equivalence such as this holds for
$n>2$. Unfortunately, it isn't clear in what category this should
occur. Indeed, developing a theory of weak Lie $n$-algebras is an open
problem.  Perhaps by studying the relationships between the structures
specifically on $\Lie(M,\omega)$ and $\Leib(M,\omega)$ for arbitrary
$n$ one could get a sense of what explicit coherence conditions would
be needed to give a good definition.

On the other hand, there are structures known as
`Loday-$\mathbf{\infty}$ algebras' (or sh Leibniz algebras)
\cite{Ammar:2008,Uchino:2009} that generalize the definition of an
$L_{\infty}$-algebra by, again, relaxing the skew symmetry condition
on the maps $\{l_{k} \}$. However, this time the skew symmetry is not
required to hold up to homotopy.  Hence any dg Leibniz algebra is a
Loday-$\infty$ algebra. Any $L_{\infty}$-algebra is as well. Therefore
there may be an isomorphism between $\Lie(M,\omega)$ and
$\Leib(M,\omega)$ in this category for $n \geq 2$.

\subsection{Multisymplectic geometry and the Gelfand-Dickey-Dorfman formalism}
In many formalisms for classical field theory, fields are considered
to be sections of a vector bundle. The observables are represented by
`local functionals' which are evaluated on sections with compact
support. Local functionals are integrals whose integrands are
functions that depend only on the fields and a finite number of their
derivatives. Such functions are called `local functions'. As usual, one can study
the time evolution of the field theory by defining a Poisson bracket
on the local functionals.  However, there is an advantage to working
with local functions directly since they are smooth functions on a
finite-dimensional space (specifically, a finite jet bundle). The
trade-off with this approach is that there is not a one-to-one
correspondence between local functionals and local functions. One has
to consider equivalence classes of local functions modulo total
divergences \cite{Dickey:1997}. Roughly, the formalism developed by Gelfand, Dickey, and
Dorfman \cite{Gelfand:1976,Gelfand:1979} involves considering the Poisson bracket on local functionals
as being induced by a skew-symmetric bracket on local functions which
is a Lie bracket only up to a total divergence. Barnich, Fulp, Lada,
and Stasheff \cite{Barnich:1997ij} showed, using the variational
bicomplex \cite{Anderson:1994}, that such a bracket gives rise to an $L_{\infty}$-algebra.

There are conceptual similarities between the $L_{\infty}$-algebras
arising in multisymplectic geometry and those
constructed by Barnich, Fulp, Lada, and Stasheff. For example,
they both naturally appear when one attempts to treat the observables
of a classical field theory within a finite-dimensional setting.
Historically, it appears that multisymplectic geometry has
developed, for the most part, independently from those formalisms
which use the variational bicomplex. However Bridges, Hydon,
and Lawson \cite{Bridges:2010} have recently reinterpreted the
multisymplectic formalism using the variational bicomplex and it may
be possible to use their results to directly compare the
$L_{\infty}$-algebras that arise in
multisymplectic geometry with those found in the Gelfand-Dickey-Dorfman formalism.

\subsection{Other generalizations of Poisson brackets}
In Nambu mechanics \cite{Gautheron:1996,Nambu:1973}, one considers a
manifold $M$ equipped with an $n$-ary skew-symmetric bracket
$\{\ldots\}$ on the algebra of smooth functions satisfying
the identity:
\[
\{f_{1},\ldots,f_{n-1},\{g_{1},\ldots,g_{n}\}\}=\sum^{n}_{i=1}\{g_{1},\ldots,\{f_{1},\ldots,f_{n-1},g_{i}\},\ldots,g_{n}\},
\]
for all $f_{i},g_{i} \in \cinf(M)$. The vector space $\cinf(M)$ equipped with such a bracket is an example of
an `$n$-Lie algebra' \cite{Filippov:1985}. These structures are quite different
from the Lie $n$-algebras considered here. There are, however, at least
some elementary relationships between the Nambu and $n$-plectic formalisms.
For example, an $n$-plectic form on a
manifold of dimension $n+1$ determines
a dual multivector field $\pi$ of degree $n+1$. This multivector field gives an $(n+1)$-Lie bracket:
\[
\{f_{1},\ldots,f_{n+1}\}=\pi(df_{1},\ldots,df_{n+1}).
\]
(See Theorem 1 in \cite{Gautheron:1996}.)
The $n$-plectic form also determines the graded
skew-symmetric map $l_{n+1}\maps L^{\tensor n+1} \to L$ defined in
Theorem \ref{main_thm} as part of the structure of $\Lie(M,\omega)$.
However, by definition, the restriction of $l_{n+1}$ to $\cinf(M)$ is trivial if $n >1$. 

The grading of the underlying vector space plays a key role in the
theory of $L_{\infty}$-algebras and, in particular, the Lie
$n$-algebras constructed in the present work.
The $n$-Lie algebras of the form $(\cinf(M),\{\ldots\})$,
on the other hand, are trivially graded structures. In fact, it has been demonstrated
that $n$-Lie algebras can be understood as ``ungraded'' analogues of 
Lie $n$-algebras \cite{Iuliu-Lazaroiu:2009}.

Finally, we mention that other algebraic structures have been considered
within the multisymplectic formalism which incorporate Hamiltonian forms
of arbitrary degree. (See, for example, \cite{Cantrijn:1996} and
\cite{Forger:2002ak}.)  These forms are related to the $n$-plectic
structure via Hamiltonian multivector fields. It may be worthwhile to
investigate whether such Hamiltonian forms can be incorporated into
the Lie $n$-algebra and dg-Leibniz structures considered here.

\section*{Acknowledgments}
We thank John Baez, Jim Stasheff, and Marco Zambon for helpful comments and discussions.
We also thank the referees for their suggestions and remarks.


\begin{thebibliography}{99}
\bibitem{Ammar:2008}
M.\ Ammar and N.\ Poncin, Coalgebraic approach to the Loday infinity
category, stem differential for $2n$-ary graded and homotopy
algebras, available as \href{http://arxiv.org/abs/0809.4328}{arXiv:0809.4328}.

\bibitem{Anderson:1994}
I.\ M.\ Anderson, Introduction to the variational bicomplex, in  
\textsl{Mathematical aspects of classical field theory (Seattle, WA,
  1991)}, 51--73, \textsl{Contemp.\ Math.}, \textbf{132}, Amer.\ Math.\ Soc., Providence, RI, 1992. 

\bibitem{HDA6}
J.\ Baez and A.\ Crans, Higher-dimensional algebra VI: Lie 2-algebras, 
\href{http://www.tac.mta.ca/tac/volumes/12/14/12-14abs.html}{\sl Theory Appl.\ Categ.}
{\bf 12} (2004), 492--528.  Also available as 
\href{http://arxiv.org/abs/math/0307263}{arXiv:math/0307263}.

\bibitem{Baez:2008bu}
J.\ Baez, A.\ Hoffnung, and C.\ Rogers, Categorified symplectic
geometry and the classical string, {\sl Comm.\ Math.\ Phys.} \textbf{293} (2010), 701--715.
Also available as \href{http://arxiv.org/abs/0808.0246}{arXiv:0808.0246}.

\bibitem{Baez:2009uu}
J.\ Baez, and C.\ Rogers, Categorified symplectic geometry and the
string Lie 2-algebra, \textsl{Homology Homotopy Appl.} \textbf{12}
(2010), 221--236. Also available as \href{http://arxiv.org/abs/0901.4721}{arXiv:0901.4721}.

\bibitem{Barnich:1997ij}
G.\ Barnich, R.\ Fulp, T.\ Lada, and J.\ Stasheff, The sh Lie structure of Poisson brackets in field
theory, \textsl{Comm.\ Math.\ Phys.} \textbf{191} (1998), 585--601.
Also available as \href{http://arxiv.org/abs/hep-th/9702176}{arXiv:hep-th/9702176}.
\bibitem{Bowick-Rajeev}
M.\ J.\ Bowick and S.\ G.\ Rajeev, Closed bosonic string theory, \textsl{Nuc.\
Phys.\ B} \textbf{293} (1987), 348--384.

\bibitem{Bridges:2010} 
T.\ J.\ Bridges, P.\ E.\ Hydon, and J.\ K.\ Lawson, Multisymplectic structures and the variational bicomplex,
  \textsl{Math.\ Proc.\ Cambridge Philos.\ Soc.} \textbf{148}(2010), 159--178.

\bibitem{Cantrijn:1996} F.\ Cantrijn, A.\ Ibort, and M. de Le\'{o}n,
Hamiltonian structures on multisymplectic manifolds, \textsl{Rend.\
  Sem.\ Mat.\ Univ.\ Politec.\ Torino} \textbf{54} (1996), 225--236.

\bibitem{Cantrijn:1999et} F.\ Cantrijn, A.\ Ibort, and M. de Le\'{o}n, On
the geometry of multisymplectic manifolds, {\sl J.\ Austral.\ Math.\
Soc.\ (Series A)} \textbf{66} (1999), 303--330.

\bibitem{Carinena-Crampin-Ibort} J.\ F.\ Cari\~{n}ena, M.\ Crampin, 
and L.\ A.\ Ibort, On the multisymplectic formalism for first order
field theories, \textsl{Diff.\ Geom.\ Appl.\ } \textbf{1} (1991), 345--374.

\bibitem{Dickey:1997}
L.A.\ Dickey, Poisson brackets with divergence terms in field
theories: two examples, available as 
\href{http://arxiv.org/abs/solv-int/9703001}{arXiv:solv-int/9703001}.

\bibitem{Filippov:1985}
V.\ T.\ Filippov, $n$-Lie algebras, \textsl{Sibirsk.\ Mat.\ Zh.}
\textbf{26} (1985), 126--140.

\bibitem{Forger:2002ak} M.\ Forger, C.\ Paufler, and H.\ R\"{o}mer,
  The Poisson bracket for Poisson forms in multisymplectic field
  theory, \textsl{Rev.\ Math.\ Phys.} \textbf{15} (2003), 705--744.
Also available as \href{http://arxiv.org/abs/math-ph/0202043}{arXiv:math-ph/0202043}.

\bibitem{Gautheron:1996}
P.\ Gautheron, Some remarks concerning Nambu mechanics, \textsl{Lett.\
  Math.\ Phys.} \textbf{37} (1996), 103--116.

\bibitem{Gelfand:1976}
I.\ M.\ Gelfand and L.\ A.\ Dickey, A Lie algebra structure in the formal
calculus of variations, \textsl{Funktsional.\ Anal.\ Priloz.} \textbf{10} (1976), 18--25.

\bibitem{Gelfand:1979}
I.\ M.\ Gelfand and I.\ Ya.\ Dorfman, Hamiltonian operators and algebraic
structures associated with them, \textsl{Funktsional.\ Anal.\ Priloz.} \textbf{13} (1979), 13--30.

\bibitem{GIMM} M.\ Gotay, J.\ Isenberg, J.\ Marsden, and R.\ Montgomery,
Momentum maps and classical relativistic fields. Part I: covariant 
field theory, available as
\href{http://arxiv.org/abs/physics/9801019}{\texttt arXiv:physics/9801019}.

\bibitem{Helein:2002wf} F.\ H\'{e}lein, Hamiltonian formalisms for
multidimensional calculus of variations and perturbation theory, in
\textsl{Noncompact Problems at the Intersection of Geometry}, eds.\
A.\ Bahri \textit{et al}, AMS, Providence, Rhode Island, 2001, pp.\
127--148. Also available as
\href{http://arxiv.org/abs/math-ph/0212036}{arXiv:math-ph/0212036}.

\bibitem{Ibort:2000} A.\ Ibort, Multisymplectic geometry: generic and
exceptional, in \textsl{Proceedings of the IX Fall Workshop on
Geometry and Physics, Vilanova i la Geltr\'u, 2000}, eds.\ X.\
Gr\'{a}cia \textit{et al}, Publicaciones de la RSME vol.\ 3, Real
Sociedad Matem\'atica Espa\~nola, Madrid, 2001, pp.\ 79--88.

\bibitem{Iuliu-Lazaroiu:2009}
C.\ Iuliu-Lazaroiu, D.\ McNamee, C.\ S\"{a}mann, and A.\ Zejak, Strong
homotopy Lie algebras, generalized Nahm equations, and multiple $M2$-branes,
available as \href{http://arxiv.org/abs/0901.3905}{arXiv:0901.3905}.

\bibitem{Kijowski:1973gi} J.\ Kijowski, A finite-dimensional canonical
formalism in the classical field theory, \textit{Commun.\ Math.\
Phys.} \textbf{30} (1973), 99--128.

\bibitem{KosSchwarz:2004} Y.\ Kosmann-Schwarzbach, Derived brackets,
  \textit{Lett.\ Math.\ Phys.} \textbf{69} (2004), 61--87.

\bibitem{Lada-Markl} T.\ Lada and M.\ Markl, Strongly homotopy Lie algebras, \textsl{Comm.\ Algebra.} \textbf{23} (1995), 2147--2161. Also available as \href{http://arxiv.org/abs/hep-th/9406095v1}{arXiv:hep-th/9406095}.

\bibitem{LS} T.\ Lada and J.\ Stasheff, Introduction to sh Lie
  algebras for physicists, {\sl Int.\ Jour.\ Theor.\ Phys.}
  \textbf{32} (7) (1993), 1087--1103.  Also available as
  \href{http://arxiv.org/abs/hep-th/9209099}{hep-th/9209099}.

\bibitem{Liu:1997}
Z.-J.\ Liu, A.\ Weinstein and P.\ Xu, Manin triples for Lie
bialgebroids, \textsl{J.\ Differential Geom.} \textbf{45} (1997), 547--574.

\bibitem{Merkulov:1992xm}
S.\ A.\ Merkulov, On the geometric quantization of bosonic string,
\textsl{Class. Quant. Grav.} \textbf{9} (1992), 2267--2276.

\bibitem{Nakajima:1994}
H.\ Nakajima, Instantons on ALE spaces, quiver varieties, and
Kac-Moody algebras, \textsl{Duke Math. J.} \textbf{76} (1994), 365--416.

\bibitem{Nakajima:2000}
H.\ Nakajima, Quiver varieties and finite dimensional representations
of quantum affine algebras, \textsl{J.\ Amer.\ Math.\ Soc.}
\textbf{14} (2001), 145--238.
 
\bibitem{Nambu:1973}
Y.\ Nambu, Generalized Hamiltonian dynamics, \textsl{Phys.\ Rev.\ D}
\textbf{7} (1973), 2405--2412.

\bibitem{RomanRoy:2005en}
N.\ Rom\'{a}n-Roy, Multisymplectic Lagrangian and Hamiltonian
formalisms of classical field theories, \textsl{SIGMA} \textbf{5}
(2009), 100, 25 pages. Also available as 
\href{http://arxiv.org/abs/math-ph/0506022}{arXiv:math-ph/0506022}.

\bibitem{Rogers:2010ac}
C.\ Rogers, 2-plectic geometry, Courant algebroids, and categorified prequantization,
available as \href{http://arxiv.org/abs/1009.2975}{arXiv:1009.2975 [math-ph]}.

\bibitem{Roytenberg-Weinstein}D.\ Roytenberg and A.\ Weinstein,
  Courant algebroids and strongly homotopy Lie algebras, 
 \textsl{Lett.\ Math.\ Phys.}  \textbf{46} (1998), 81--93. Also
   available as \href{http://arxiv.org/abs/math/9802118}{arXiv:math/9802118}.

\bibitem{Roytenberg_L2A} D.\ Roytenberg, On weak Lie 2-algebras, 
in: P.\ Kielanowski \textit{et al} (eds.)\ XXVI Workshop on Geometrical Methods in Physics.
AIP Conference Proceedings \textbf{956}, pp. 180-198. 
American Institute of Physics, Melville (2007).
Also available as \href{http://arxiv.org/abs/0712.3461}
{arXiv:0712.3461}.

\bibitem{Severa1} P.\ \v{S}evera, Letter to Alan Weinstein, available
  at \href{http://sophia.dtp.fmph.uniba.sk/~severa/letters/}
{\url{http://sophia.dtp.fmph.uniba.sk/~severa/letters/}}

\bibitem{Swann:1991}
A.\ Swann, Hyper-{K}\"ahler and quaternionic {K}\"ahler geometry,
\textsl{Math.\ Ann.} \textbf{289} (1991), 421--450.

\bibitem{Uchino:2009}
K.\ Uchino, Derived brackets and sh Leibniz algebras, available as
\href{http://arxiv.org/abs/0902.0044}{arXiv:0902.0044}.

\bibitem{Zambon:2010ka} M.\ Zambon, $L_{\infty}$-algebras and higher
  analogues of Dirac structures, available as
  \href{http://arxiv.org/abs/1003.1004}{arXiv:1003.1004}.

\end{thebibliography}
\end{document}